\newtheorem{theorem}{Theorem}
\newtheorem{proposition}{Proposition}
\newtheorem{lemma}{Lemma}
\theoremstyle{definition}
\theoremstyle{remark}
\newtheorem{remark}{Remark}
\newtheorem{example}{Example}
\def\NN{\mathbb{N}}
\def\11{\mathds{1}}
\def\E{\mathbb{E}}
\def\P{\mathbb{P}}
\def\R{\mathbb{R}}
\def\1{\mathbf{1}}
\def\N{\mathbb{N}}
\def\cF{{\cal F}}
\def\cX{{\cal X}}
\def\loga{\log^{(a)}}
\def\logone{\log^{(1)}}
\def\logalpha{\log^{(\alpha)}}
\def\expone{\exp^{(1)}}
\newcommand{\XX}{{\mathbf X}}
\begin{document}
    
    \title{Convergence of weighted branching processes}
\author{Denis Villemonais\footnote{IRMA, Université de Strasbourg, France}$\ \,$\footnote{Institut Universitaire de France}$\ $ and Nicol\'as Zalduendo\footnote{Centro de Modelamiento Matem\'atico (CNRS IRL2807), Universidad
		de Chile, Santiago, Chile\\ \texttt{denis.villemonais@unistra.fr, nzalduendo@dim.uchile.cl}}}

    \maketitle

    \begin{abstract}
        We study the long-term behavior of weighted multi-type branching processes, focusing on  extending classical laws of large numbers and martingale convergence to settings with infinitely many weighted particles, arbitrary type spaces and non-geometric rescaling. We demonstrate applications to Galton-Watson trees indexed by random weights and by random kernels, convergence in Wasserstein distance of the underlying mean semi-group,  and convergence of ergodic averages along lineages.
    \end{abstract}

     \noindent\textit{Keywords: }{Weighted particles systems; Galton-Watson trees; $L\log L$-type criterion; Biggins martingale; Law of large numbers}
 
 \medskip\noindent\textit{2010 Mathematics Subject Classification.} Primary: {60J80; 60F15}. Secondary: {60G57}

    \section{Introduction}

    We consider discrete time branching type models with weights, allowing infinitely many particles per generation but finite total mass.
    Informally,  we have in mind a branching process with type space $\XX$ where any particle of type $x\in \XX$ and weight $w\in[0,+\infty)$ gives birth at the next generation to a family of weighted particles $Y_{i}$ with respective weights $ww_i$, $i\geq 1$, where the family $(w_i,Y_i)_{i\geq 1}$ is independent from the rest of the branching process and has a law which only depends on the type $x$ of the parent particle. Iterating this process allows to define a sequence of families, indexed by time and which we will refer to as the generations, of weighted and typed particles.
    In this article, we focus on conditions ensuring that, up to a multiplicative factor, the corresponding sequence of weighted empirical measures converges when the time goes to infinity.

    Branching models with weights but not types have received much
    attention and are usually referred to as Mandelbrot's martingale~\cite{liang2015weighted}, Mandelbrot's
    cascade~\cite{liu2000generalized} or simply weighted
    branching processes~\cite{vatutin2002rate,roesler2016weighted}. In
    particular, the limiting behavior is well
    known and necessary and sufficient conditions for the
    existence of an exponential moment for its limit have been obtained~\cite{bingham1975asymptotic} when weights are bounded by $1$ and extensively studied in subsequent
    works in the unbounded-weights case~\cite{kahane1976certaines,durrett1983fixed,liu1996products,liu2000generalized,alsmeyer2005double,liang2015weighted}. 
    We emphasize that the classical well-studied branching model with types corresponds to the situation where all the weights equal $1$, with finitely many particles in each generation, see e.g.~\cite{Harris1963,AsmussenHering1976} and their generalized version~\cite{Jagers1989}.

    In order to define the process, we assume that we are given a measurable type space $(\XX,\mathcal X)$ and 
    a probability kernel $K$ from $\XX$ to $(\mathbb R_+\times \XX)^{\mathbb N}$, $\mathbb N=\{0,1,\dots\}$, endowed with its cylindrical $\sigma$-field. Informally, $K$ determines the law of individuals progeny: given a parent with weight $u\in \mathbb R^+$ and type $x\in \XX$, its progeny is given by individuals with weights $uu_i$ and types $x_i$, $i\in \mathbb N$, where $(u_i,x_i)_{i\in \mathbb N}$ has law $K(x,\cdot)$.

    Each generation is represented by a weighted discrete measure on $\XX$, and  the initial population is denoted by
    \[
    G_0=\sum_{e\in\mathbb N} w_e\delta_{X_e},\quad\text{ where }(w_e,X_e)_{e\in\mathbb N}\in (\mathbb R_+\times \XX)^{\mathbb N},
    \]
    where $ei$ denotes the concatenation of $e$ and $i$. We define iteratively the population at each generation $n+1$, $n\geq 0$, denoted by 
    \[
    G_{n+1}=\sum_{e\in\mathbb N^{n+1}}  w_{e} \delta_{X_{e}}=\sum_{e\in\mathbb N^{n},i\in\mathbb N}  w_{e}u_{ei} \delta_{X_{ei}},
    \]
    where the $(u_{ei},X_{ei})_{i\in\mathbb N}$ are distributed according to $K(X_e,\cdot)$ and  independent from each  other conditionnaly to 
    \[
    \mathcal F_n:=\sigma(w_{e'},X_{e'},\ e'\in\mathbb N^k,\ k\leq n).
    \] 
    These models can be seen as a generalization of
    weighted multi-type Galton Watson processes with arbitrary type space
    and weights, where, informally, $(w_{ei}X_{ei})_{i\in\mathbb N}$ is seen as the weighted progeny of the individual with label $e$ and type $X_e$.
    
    We define the  non-negative kernel $(Q_x)_{x\in \XX}$
    from $\XX$ to itself as follows: for all $n\geq 0$, $e\in\mathbb N^n$, $x\in \XX$ and $A\in\cX$,
    \begin{align}
    Q_x(A):=\mathbb E\left(\sum_{i\in \mathbb N} u_i\mathbf 1_{Y_i\in A}\right),\ \text{ where }(u_i,Y_i)_{i\in\mathbb N}\text{ is distributed according to }K(x,\cdot).
    \end{align}
    In particular, for all $n\geq 0$, $e\in\mathbb N^n$, $x\in \XX$ and $A\in\cX$,
    \begin{align}
        \label{eq:def-Qx}
       \E(G_{n+1}(A)\mid \mathcal F_n)= \E\left(\sum_{i\in\mathbb N}  w_{ei} \mathbf 1_{X_{ei}\in A}\mid\cF_{n}\right)= w_e\,Q_{X_e}(A).
    \end{align}
    One easily observes that, for all $m,n\geq 0$,
    \begin{align}
        \label{eq:intro-many-to-one}
        \mathbb E(G_{m+n}\mid \mathcal F_m)=G_m Q^{n},
    \end{align}
    and one thus expect that the asymptotic behaviour of $G_n$ will be largely determined by the asymptotic behaviour of the iterated kernels $Q^n$ when $n\to+\infty$.
    The main aim of this paper is to study the long time behaviour of
    $(G_n)_{n\geq 0}$. 
    
    For classical branching process in discrete time, this behaviour is well understood: Asmussen and Hering~\cite{AsmussenHering1976} have proposed a robust proof technique that establishes similar laws of large numbers under strong convergence assumptions in total variation. In the different context of Mandelbrot cascades, significant similar results have long been known, see e.g.~\cite{bingham1975asymptotic} The first objective of this paper is to extend these results within a unified framework—namely, that of weighted multi-type branching processes. Furthermore, we consider convergence under more general conditions, and allow for instance convergence in Wasserstein distance or for only specific test functions. In addition,  we allow the spectral vector spaces of the expectation semigroup to be multi-dimensional, which is particularly relevant for reducible processes. We also extend the classical geometric rescaling of the semigroup to a combined polynomial-geometric scaling, which naturally appears for non-irreducible semi-groups. These generalizations broaden the scope and applicability of classical results in the theory of branching processes. 
    
    Let us also mention, as was already observed in~\cite{AsmussenHering1976} in the classical case, that continuous time branching processes discretized in time are discrete time branching processes, so that results can be transferred from the latter setting to the former, and in particular the $L^p$ convergence can be extended directly. There are however several technical difficulties to extend the almost sure convergence, some of which can be overcome using the method of~\cite{AsmussenHering1976} or by using model specific technics (see e.g.~\cite{HortonKyprianou2023} and references therein). For weighted branching processes, there is however an additional difficulty: since infinitely many particles are allowed, there are actually infinitely many branching events in any open interval of time. This difficulty requires a different construction and approach compared to the usual framework and is the subject of an ongoing work.

     In Section~\ref{secDescription}, we state and prove sufficient conditions for the $L^p$ convergence of properly rescaled weighted branching processes. In Section~\ref{secXlogX}, we prove a $L\log L$-type criterion for the branching process. Finally, in Section~\ref{sec:examples}, we apply our results to four different models: we consider products of random weights indexed by Galton-Watson trees (Section~\ref{sec:exaGW}), products of random kernels indexed by Galton-Watson trees (Section~\ref{sec:exakernels}), weighted branching processes whose mean semi-kernel contracts in Wasserstein distance (Section~\ref{sec:exaWasserstein}) and convergence of ergodic averages along lineages (Section~\ref{sec:exainhomogeneous}).

    \section{Law of large numbers in $L^p$}
    \label{secDescription}
    
    The aim of this section is to provide a strong law of large numbers in
    $L^p$ for the sequence $(G_n)_{n\in\N}$.
    Our main assumption is made of a strong mixing condition on
    the kernel $Q$ and a $L_p$ bound on the dispersion of the random
    progeny.

For any $p\in(1,2]$, we define the kernel $Q^{(p)}$ from $\XX$ to itself defined as follows: for all $x\in \XX$ and $A\in\mathcal X$,
\begin{align*}
       Q^{(p)}_{x} A:= \mathbb E\left(\sum_{i\geq 1}u_i^p\,\mathbf 1_{Y_i\in A}\right),\quad\text{ where }(u_i,Y_i)_{i\geq 1}\text{ is distributed according to $K(x,\cdot)$}.
\end{align*}
Defining
\begin{align*}
    G^{(p)}_n=\sum_{e\in \mathbb N^n}w_e^p \delta_{X_e},
\end{align*}
we observe that
\begin{align*}
    \mathbb E(G^{(p)}_n)= \mathbb E(G^{(p)}_0)(Q^{(p)})^n
\end{align*}

In the following assumption and in the rest of the paper, the notation $\|\cdot\|_{\psi_1}$ stands for the weighted infinite norm $\|g\|_{\psi_1}=\sup_{x\in X}|g(x)/\psi_1(x)|$, for all measurable function $g:X\to\mathbb R$.

    \medskip \noindent\textbf{Assumption MD$(f,p)$.} We have   $p\in(1,2]$ and $f:\XX\to \mathbb R$ is a measurable function. In addition, there exists two measurable functions $\psi_1:\XX\to(0,+\infty)$ and $\psi_2:\XX\to[0,+\infty)$, and a constant $\theta_1>0$ such that
    \begin{enumerate}
        \item $\|f\|_{\psi_1}<\infty$ and there exists a sequence of non-negative
        numbers $(\alpha_n)_{n\in\N}$ which converges to $0$, a non-negative integer $\beta$, a function $\eta_f:\XX\to \mathbb R$  and a constant  $c_1>0$ such that
        \begin{align}
            \label{eq:eqMD1}
            \left|n^{-\beta}\theta_1^{-n}\delta_x Q^n f-\eta_f(x)\right|\leq \alpha_n  \,\psi_1(x)      ,\quad \forall n\geq 1,\ x\in \XX;
        \end{align}
        and
        \begin{align}
            \label{eq:eqMD2}
            n^{-\beta}\theta_1^{-n}\delta_x Q^n \psi_1\leq c_1\,\psi_1(x)
            ,\quad\forall n\geq 0,\,x\in \XX;
        \end{align}

        \item there exist two constants $c_2,c_3>0$, a summable non-negative sequence $(\gamma_n)_{n\in\mathbb N}$ and a measurable function $\psi_2$ such that 
            \begin{align}
            	\label{eq:eqMD3bis1}
            \theta_1^{-pn}\delta_x (Q^{(p)})^n \psi_2^p\leq c_2\,\gamma_n^p \psi_2^p(x)\,/\,n^{(p/2-1)_+},
            \end{align}
             and, for all measurable function $g:\XX\to \mathbb R$ and all $x\in \XX$,
            \begin{align}
                \label{eq:eqMD3bis}
                 \E\left(\left|\sum_{i\in\mathbb N} u_i g(Y_i)-\delta_x Q g\right|^p\right)\leq c_3\psi_2(x)^{p} \|g\|_{\psi_1}^p,
            \end{align}
            where $(u_i,Y_i)_{i\in\mathbb N}$ is distributed according to $K(x,\cdot)$.% and where $\|g\|_{\psi_1}=\sup_{y\in Y} |g(y)/\psi_1(y)|$
    \end{enumerate}

\begin{remark}
    \begin{itemize}
        \item  In the literature, properties similar to 
the first part of Assumption~MD($f,p$) are obtained for several functions $f$ at once, using classical strong mixing criteria in total variation spaces, see e.g. the reference books~\cite{MeynTweedie2009,DoucMoulinesEtAl2018} for conservative semi-groups and~\cite{DelMoral2004,DelMoral2013} for non-conservative semi-groups. While most of related works for branching processes focus on exponential renormalisation (i.e. $\beta=0$), one novelty of our work is to allow for polynomial renormalisation (i.e. $\beta>0$), which is typically needed for non-conservative processes in reducible state spaces (see e.g.~\cite{ChampagnatVillemonais2022,villemonais2025quasi} and references therein).
    \item Another particularity of Assumption MD($f,p$) is that we allow  $\beta$, $\theta_1$, $\nu_Q$ and $\eta$ to depend on $f$. This is particularly useful when the optimal coefficients $\beta$ and $\theta_1$ depend on $f$ (this is typically the case in reducible state spaces) and when the convergence only holds true for a particular subset of functions (e.g. regular ones, which is typically the case when the convergence holds true in Wasserstein distance).
    \item We also emphasize that, similarly as in~\cite{Jagers1989}, we consider the $L^p$ dispersion, instead of the more usual $L^p$ moment, of $\sum_{i\in\mathbb N} u_i g(Y_i)$. This allows for large values/low variation of the progeny.
    \end{itemize}
\end{remark}

    \begin{theorem}
        \label{thmMain}
        Assume that Assumptions~MD($f,p$) holds true for some function $f$ and $p\in(1,2]$, and set $\Gamma_m=\sum_{k\geq m}\gamma_k$. Then there exists a constant $C>0$ such that
    \begin{multline}
    \label{eq:eqthmMain}
        \sqrt[p]{ \E\left(\left|(n+m)^{-\beta}\theta_1^{-(m+n)}G_{m+n}(f)-W^f_\infty\right|^p\right)}\\ 
        \begin{aligned}
            \leq&\quad c_0\theta_1^{-1} \left(c_1\|f\|_{\psi_1}+\|\eta_f\|_{\psi_1}\right)\,\Gamma_m  \sqrt[p]{\E \left(G_0^{(p)} (\psi_2^p)\right)}\\
            &+\left(\alpha_n \dfrac{n^\beta m^\beta}{(n+m)^\beta}c_1 + \|\eta_f\|_{\psi_1} \left(1-\dfrac{n^\beta}{(n+m)^\beta}\right)\right) \left(c_0 \theta_1^{-1}\,\Gamma_0 \sqrt[p]{\E \left(G_0^{(p)} (\psi_2^p)\right)} + \sqrt[p]{\E\left(\left|G_0\psi_1\right|^p\right)}\right).
        \end{aligned}
    \end{multline}
        where
        $W_\infty^f$ is the almost sure and $L^p$ limit of
        the uniformly
        integrable martingale $W^f:=(\theta_1^{-n}G_n(\eta_f))_{n\in\N}$, and with $c_0=\sqrt[p]{2c_2c_3}$.
    \end{theorem}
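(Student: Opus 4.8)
The plan is to show that the rescaled sequence $M_n := \theta_1^{-n} G_n(\eta_f)$ is a martingale with respect to $(\mathcal F_n)_{n\geq 0}$, bounded in $L^p$, hence convergent a.s. and in $L^p$ to some $W^f_\infty$; and then to control the discrepancy between $(n+m)^{-\beta}\theta_1^{-(m+n)}G_{m+n}(f)$ and $M_{m+n}$, and between $M_{m+n}$ and $W^f_\infty$, using a telescoping decomposition over generations $m, m+1, \dots$ together with the $L^p$ dispersion bound~\eqref{eq:eqMD3bis} and the growth bound~\eqref{eq:eqMD3bis1}.

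\textbf{Step 1: the martingale and its increments.}
First I would verify that $M_n = \theta_1^{-n}G_n(\eta_f)$ is a martingale: by~\eqref{eq:def-Qx} and~\eqref{eq:intro-many-to-one}, $\E(G_{n+1}(\eta_f)\mid\mathcal F_n) = G_n Q\eta_f$, and~\eqref{eq:eqMD1}--\eqref{eq:eqMD2} with the limit function $\eta_f$ should force $Q\eta_f = \theta_1\,\eta_f$ (this is the eigen-relation one extracts by passing to the limit in $n^{-\beta}\theta_1^{-n}\delta_x Q^n f$; more precisely $\delta_x Q^n f / (n^\beta \theta_1^n) \to \eta_f(x)$ and applying $Q$ once shifts $n\to n+1$, and since $\beta$ only contributes a factor $((n+1)/n)^\beta\to 1$ one gets $Q\eta_f=\theta_1\eta_f$). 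Hence $\E(M_{n+1}\mid\mathcal F_n) = \theta_1^{-(n+1)}G_n(Q\eta_f) = M_n$. For the increment $M_{n+1}-M_n = \theta_1^{-(n+1)}\big(G_{n+1}(\eta_f) - G_n(Q\eta_f)\big)$, I would write this as a conditionally-independent sum over $e\in\mathbb N^n$ of the terms $w_e\big(\sum_i u_{ei}\eta_f(X_{ei}) - \delta_{X_e}Q\eta_f\big)$, apply~\eqref{eq:eqMD3bis} with $g = \eta_f$ (so $\|g\|_{\psi_1} = \|\eta_f\|_{\psi_1}$) conditionally on $\mathcal F_n$, and then use the von Bahr--Esseen / Marcinkiewicz--Zygmund-type inequality for sums of conditionally independent centered random variables (valid for $p\in(1,2]$, with constant $2$) to obtain
\begin{align*}
\E\big(|M_{n+1}-M_n|^p\big) \leq 2\,c_3\,\|\eta_f\|_{\psi_1}^p\,\theta_1^{-p(n+1)}\,\E\Big(\sum_{e\in\mathbb N^n} w_e^p\,\psi_2(X_e)^p\Big) = 2\,c_3\,\|\eta_f\|_{\psi_1}^p\,\theta_1^{-p(n+1)}\,\E\big(G^{(p)}_n(\psi_2^p)\big).
\end{align*}
Finally $\E\big(G^{(p)}_n(\psi_2^p)\big) = \E\big(G^{(p)}_0\big)(Q^{(p)})^n\psi_2^p \leq c_2\,\gamma_n^p\,\theta_1^{pn}\,\E\big(G^{(p)}_0(\psi_2^p)\big)/n^{(p/2-1)_+}$ by~\eqref{eq:eqMD3bis1}, so $\|M_{n+1}-M_n\|_p \leq c_0\theta_1^{-1}\|\eta_f\|_{\psi_1}\,\gamma_n\,\sqrt[p]{\E(G^{(p)}_0(\psi_2^p))}\,/\,n^{(1/2-1/p)_+}$ with $c_0 = \sqrt[p]{2c_2c_3}$. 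Summability of $(\gamma_n)$ then gives $\sum_n\|M_{n+1}-M_n\|_p < \infty$, hence $(M_n)$ is Cauchy in $L^p$ and converges a.s.\ (being an $L^p$-bounded, $p>1$, martingale it is uniformly integrable) to $W^f_\infty$; moreover $\|M_{m+n}-W^f_\infty\|_p \leq \sum_{k\geq m+n}\|M_{k+1}-M_k\|_p \leq c_0\theta_1^{-1}\|\eta_f\|_{\psi_1}\Gamma_{m+n}\sqrt[p]{\E(G^{(p)}_0(\psi_2^p))}$, and a similar bound with $\Gamma_0$ starting the sum at $0$ bounds $\|M_{m+n}\|_p$ plus a $\|G_0\eta_f\|_p$ term (and $|G_0\eta_f|\le \|\eta_f\|_{\psi_1}|G_0\psi_1|$).

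\textbf{Step 2: comparing $G_{m+n}(f)$ with $G_{m+n}(\eta_f)$.}
The remaining task is to estimate $A_{m,n} := (n+m)^{-\beta}\theta_1^{-(m+n)}G_{m+n}(f) - M_{m+n}$. Here I would condition on $\mathcal F_m$ and use the Markov property: $\E\big(G_{m+n}(f)\mid\mathcal F_m\big) = G_m Q^n f$. The idea is to compare $G_{m+n}(f)$ to its conditional expectation given $\mathcal F_m$ and separately compare that conditional expectation to $(n+m)^\beta\theta_1^{n+m}M_{m+n}$'s conditional expectation $G_m(\theta_1^n\eta_f)\cdot\theta_1^m$... more carefully: write
\begin{align*}
A_{m,n} = \underbrace{(n+m)^{-\beta}\theta_1^{-(m+n)}\big(G_{m+n}(f) - \E(G_{m+n}(f)\mid\mathcal F_m)\big)}_{=:A^{(1)}_{m,n}} + \underbrace{(n+m)^{-\beta}\theta_1^{-(m+n)}G_m Q^n f - M_{m+n}}_{=:A^{(2)}_{m,n}}.
\end{align*}
For $A^{(2)}_{m,n}$ I would not centre by $\mathcal F_m$ but rather bound $\|A^{(2)}_{m,n}\|_p$ by splitting: $(n+m)^{-\beta}\theta_1^{-(m+n)}G_mQ^n f = \theta_1^{-m} G_m\big(n^{-\beta}\theta_1^{-n}\cdot\frac{n^\beta}{(n+m)^\beta}Q^nf\big)$, and using~\eqref{eq:eqMD1} to replace $n^{-\beta}\theta_1^{-n}Q^nf$ by $\eta_f$ up to an error controlled by $\alpha_n\psi_1$ in $\|\cdot\|_{\psi_1}$; this produces the term $\alpha_n\frac{n^\beta m^\beta}{(n+m)^\beta}c_1$ (after passing through~\eqref{eq:eqMD2} to bound $\theta_1^{-m}G_m\psi_1$'s role... actually $\theta_1^{-m}\|G_m\psi_1\|_p$ which is itself bounded via $\E(G_m\psi_1)^p$—here I'd use that $G_m\psi_1 \le$ something, or more simply bound $\|\theta_1^{-m}G_m\psi_1\|_p$ by the martingale-type quantity appearing in the second big factor of~\eqref{eq:eqthmMain}, namely $c_0\theta_1^{-1}\Gamma_0\sqrt[p]{\E(G^{(p)}_0(\psi_2^p))} + \sqrt[p]{\E(|G_0\psi_1|^p)}$, applying~\eqref{eq:eqMD2}, \eqref{eq:eqMD3bis1} and the same $L^p$ increment analysis as in Step~1 but with test function $\psi_1$ in place of $\eta_f$ — this is where~\eqref{eq:eqMD2} and the $\psi_1$-bound combine), and also the term $\|\eta_f\|_{\psi_1}(1-\frac{n^\beta}{(n+m)^\beta})$ coming from the mismatch $\frac{n^\beta}{(n+m)^\beta}\eta_f$ versus $\eta_f$ and from $M_{m+n} - \theta_1^{-m}G_m(\eta_f)$ being further controlled, again, by $\Gamma_0$-type terms. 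For $A^{(1)}_{m,n}$ I would telescope over generations $m, m+1,\dots,m+n-1$ exactly as in Step~1 (each increment $G_{k+1}(f)$ vs.\ $G_k Qf$ conditionally on $\mathcal F_k$, bounded by~\eqref{eq:eqMD3bis} with $g = Q^{\,m+n-1-k}f$, whose $\psi_1$-norm is $\leq (c_1\|f\|_{\psi_1}+\|\eta_f\|_{\psi_1})\,(\cdots)\,\theta_1^{\,m+n-1-k}\,(m+n-1-k)^\beta$ by~\eqref{eq:eqMD1}--\eqref{eq:eqMD2}), summing the resulting geometric-in-$\gamma$ series from generation $m$ onward to get the leading $\Gamma_m$ term with coefficient $c_0\theta_1^{-1}(c_1\|f\|_{\psi_1}+\|\eta_f\|_{\psi_1})$ and the $G_0^{(p)}(\psi_2^p)$ factor.

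\textbf{Main obstacle.}
The conceptually routine but technically delicate point is the bookkeeping in Step~2: one must carefully track the polynomial prefactors $n^\beta$, $m^\beta$, $(n+m)^\beta$ through every telescoped increment — in particular verifying that $\|Q^{\,j}f\|_{\psi_1}$ grows at most like $\theta_1^{\,j} j^\beta$ uniformly (which follows by combining~\eqref{eq:eqMD1} at order $j$, giving $\|\delta_\cdot Q^jf - j^\beta\theta_1^j\eta_f\|_{\psi_1}\le j^\beta\theta_1^j\alpha_j$, with $\|\eta_f\|_{\psi_1}<\infty$) — and that the sum $\sum_{k\ge m}$ of the increment bounds, which carry a factor $\gamma_{\,\text{(number of remaining steps)}}$ together with a factor $(j/j')^\beta \le$ constant from the polynomial mismatch, indeed telescopes to $\Gamma_m$ and not to something larger. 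The second genuine subtlety is establishing the eigen-relation $Q\eta_f = \theta_1\eta_f$ rigorously from~\eqref{eq:eqMD1}--\eqref{eq:eqMD2} (needed for $(M_n)$ to be a martingale): one applies $Q$ to the inequality $|n^{-\beta}\theta_1^{-n}\delta_xQ^nf - \eta_f(x)|\le\alpha_n\psi_1(x)$, uses~\eqref{eq:eqMD2} to control $\delta_xQ^n(\alpha_n\psi_1)$, and compares with the same inequality at order $n+1$, letting $n\to\infty$; the polynomial factor forces one to divide by $(n+1)^\beta$ vs.\ $n^\beta$, whose ratio tends to $1$, so the relation survives. Everything else is an application of the von Bahr--Esseen inequality, the two display-inequalities of Assumption~MD, and the semigroup identity $\E(G_{m+n}\mid\mathcal F_m)=G_mQ^n$.
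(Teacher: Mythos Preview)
Your proposal is correct and follows essentially the same approach as the paper: the paper decomposes the target into three pieces $A_{m,n}(f)$, $B_{m,n}(f)$, $C_{m,n}(f)$ corresponding respectively to your telescoped fluctuation term $A^{(1)}_{m,n}$, the semigroup-approximation error you extract from $A^{(2)}_{m,n}$ via~\eqref{eq:eqMD1}, and the martingale remainder $\frac{n^\beta}{(n+m)^\beta}W^f_m-W^f_\infty$; the eigen-relation $Q\eta_f=\theta_1\eta_f$ and the one-step $L^p$ increment bound (via Chatterji's inequality, your ``von Bahr--Esseen'') are isolated as preliminary lemmas exactly as you outline, and the control of $\|\theta_1^{-m}G_m\psi_1\|_p$ is obtained by rerunning the telescoping argument with $h=\psi_1$, just as you suggest. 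One small slip in your ``main obstacle'' commentary: the index on $\gamma$ in the telescoped increment at generation $m+k$ is the \emph{generation number} $m+k$ (giving $\sum_{k=0}^{n-1}\gamma_{m+k}\leq\Gamma_m$), not the number of remaining steps --- but your Step~2 text (``summing from generation $m$ onward to get $\Gamma_m$'') already has this right.
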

    
    In the proof of Theorem~1, we will make use of the second part of Assumption~MD($f,p$) through the following lemma.
    \begin{lemma}
        \label{prop:suffcondMD2}
       Let $g:\XX\to\mathbb R$ such that $\|g\|_{\psi_1}<\infty$. Then, for all $n\geq 0$, 
        \begin{align}
            \label{eq:eqMD3}
            \sqrt[p]{\E\left(\left| G_{n+1}g-G_nQg\right|^p\right)}\leq c_0 \gamma_n \theta_1^{n} (\mathbb E(G_0^{(p)}\psi_2(x_0)^p)^{1/p} \|g\|_{\psi_1},
        \end{align}
         with $c_0=\sqrt[p]{C_pc_2c_3}$, where 
        \[
        C_p=\begin{cases}
            2&\text{ if }p\in(1,2]\\
            [8(p-1)\max(1,2^{p-3})]^p&\text{ if } p>2.
        \end{cases}
        \]
    \end{lemma}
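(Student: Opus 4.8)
The plan is to express $G_{n+1}g-G_nQg$ as a sum of conditionally independent, conditionally centred contributions — one for each particle of generation $n$ — and then to apply a moment inequality of von Bahr--Esseen / Marcinkiewicz--Zygmund type conditionally on $\cF_n$. Concretely, I would first write, using the definition of $G_{n+1}$,
\[
G_{n+1}g-G_nQg=\sum_{e\in\N^n}w_e\,D_e,\qquad D_e:=\sum_{i\in\N}u_{ei}\,g(X_{ei})-\delta_{X_e}Qg ,
\]
and record that each $w_e$ is $\cF_n$-measurable; that, conditionally on $\cF_n$, the family $(u_{ei},X_{ei})_{i\in\N}$ has law $K(X_e,\cdot)$ and the families attached to distinct $e$ are independent, so that the $(D_e)_{e\in\N^n}$ are conditionally independent given $\cF_n$; and that $\E(D_e\mid\cF_n)=0$, which is precisely the defining identity \eqref{eq:def-Qx} of $Q$. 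I would also note that $\delta_{X_e}Qg$, and hence $G_{n+1}g$, is well defined because $\|g\|_{\psi_1}<\infty$ and $Q\psi_1<\infty$ by \eqref{eq:eqMD2} with $n=1$.

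Next, I would control each summand via the dispersion bound \eqref{eq:eqMD3bis} evaluated at $x=X_e$, which gives $\E(|D_e|^p\mid\cF_n)\le c_3\,\psi_2(X_e)^p\,\|g\|_{\psi_1}^p$; summing over $e$ then yields $\sum_{e}\E(w_e^p|D_e|^p)\le c_3\|g\|_{\psi_1}^p\,\E(G_n^{(p)}(\psi_2^p))<\infty$, which simultaneously guarantees that the series above converges in $L^p$ and permits applying, conditionally on $\cF_n$, the von Bahr--Esseen inequality when $p\in(1,2]$ (with constant $2$) — respectively a Burkholder--Rosenthal / Marcinkiewicz--Zygmund inequality when $p>2$, which is where the constant $C_p$ enters — to the conditionally centred, conditionally independent sum $\sum_e w_e D_e$:
\[
\E\bigl(|G_{n+1}g-G_nQg|^p\,\big|\,\cF_n\bigr)\le C_p\sum_{e\in\N^n}w_e^p\,\E(|D_e|^p\mid\cF_n)\le C_p\,c_3\,\|g\|_{\psi_1}^p\,G_n^{(p)}(\psi_2^p).
\]

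Finally, I would take expectations, use the elementary identity $\E(G_n^{(p)})=\E(G_0^{(p)})(Q^{(p)})^n$ followed by \eqref{eq:eqMD3bis1} — whose factor $n^{-(p/2-1)_+}$ equals $1$ for $p\le2$ — to obtain $\E(G_n^{(p)}(\psi_2^p))\le c_2\,\gamma_n^p\,\theta_1^{pn}\,\E(G_0^{(p)}(\psi_2^p))$, so that
\[
\E\bigl(|G_{n+1}g-G_nQg|^p\bigr)\le C_p\,c_2\,c_3\,\gamma_n^p\,\theta_1^{pn}\,\|g\|_{\psi_1}^p\,\E\bigl(G_0^{(p)}(\psi_2^p)\bigr),
\]
and taking $p$-th roots gives the asserted bound with $c_0=\sqrt[p]{C_pc_2c_3}$.

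The step I expect to be the main obstacle is the moment inequality for the conditionally independent sum: it must be applied rigorously to a family of \emph{countably many} centred summands, which is where the $L^p$-summability established in the second step is needed, and one has to use the sharp form (and constant) of the von Bahr--Esseen inequality for $1<p\le2$, respectively its $L^p$ analogue for $p>2$ — keeping track of these constants is exactly what produces the stated $C_p$. Everything else is a routine combination of the two parts of Assumption~MD($f,p$) with the identity $\E(G_n^{(p)})=\E(G_0^{(p)})(Q^{(p)})^n$.
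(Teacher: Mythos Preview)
Your proposal is correct and follows essentially the same route as the paper's proof: the paper writes the same decomposition into centred increments indexed by $e\in\N^n$, invokes Chatterji's inequality (the martingale form of von Bahr--Esseen, with constant $2$ for $p\in(1,2]$), and then chains \eqref{eq:eqMD3bis}, the identity $\E(G_n^{(p)})=\E(G_0^{(p)})(Q^{(p)})^n$, and \eqref{eq:eqMD3bis1} exactly as you describe. The paper only writes out the case $p\in(1,2]$, so your remarks on the $p>2$ constant and on the $L^p$-summability of the countable sum are in fact slightly more careful than the original.
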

    
      \begin{proof}[Proof of Lemma~\ref{prop:suffcondMD2}]
        Assume without loss of generality that $G_0$ is deterministic. We have
        \begin{align*}
            G_{n+1}g-G_nQg=\sum_{e\in \mathbb N^n} w_e \left(\sum_{i\in \mathbb N} u_{ei} g(X_{ei})-Q_{X_e} g\right),
        \end{align*}
        where the second term is a sum of martingale increments. 
        In particular, for all $p\in(1,2]$, according e.g. to~\cite{Chatterji1969},
        \begin{align*}
            \mathbb E\left[\left|G_{n+1}g-G_nQg\right|^p\right]
            &\leq 2\sum_{e\in \mathbb N^n} \mathbb E\left[w_e^p\left|\sum_{i\in \mathbb N} u_{ei} g(X_{ei})-Q_{X_e} g\right|^p\right]\\
            &\leq 2 c_3\|g\|_{\psi_1}^p\sum_{e\in \mathbb N^n} \mathbb E\left[w_e^p\psi_2(X_e)^p\right] \qquad\text{ using~\eqref{eq:eqMD3bis}}\\
            &= 2 c_3\,\|g\|_{\psi_1}^p \mathbb E\left[G^{(p)}_n \psi_2^p\right]\\
            &= 2 c_3\,\|g\|_{\psi_1}^p \,G^{(p)}_0 (Q^{(p)})^n \psi_2^p\qquad\text{using~\eqref{eq:intro-many-to-one}}\\
            &\leq 2 c_2\,c_3\,\|g\|_{\psi_1}^p \,\gamma_n^p\,G_0^{(p)}\psi_2^p\qquad\text{using~\eqref{eq:eqMD3bis1},}
        \end{align*}
        which concludes the proof of Lemma~\ref{prop:suffcondMD2} when $p\leq 2$ (in this case $(p/2-1)_+=0$).
    \end{proof}
    
    \begin{lemma}
        \label{lem:etaf-eigenfunction}
        We have $\|\eta_f\|_{\psi_1}<\infty$ and $Q\eta_f=\theta_1\eta_f$. 
    \end{lemma}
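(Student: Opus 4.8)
The plan is to read off both assertions directly from the first part of Assumption~MD($f,p$), using only that $Q$ is a non-negative kernel and that $\psi_1$ satisfies the domination \eqref{eq:eqMD2}; neither irreducibility nor the second part of the assumption is needed.

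\textbf{Bounding $\eta_f$.} First I would apply \eqref{eq:eqMD1} with $n=1$ (so that the prefactor $n^{-\beta}$ equals $1$): for every $x\in\XX$, $|\eta_f(x)|\le \theta_1^{-1}\,\delta_x Q|f|+\alpha_1\psi_1(x)$. Bounding $|f|\le\|f\|_{\psi_1}\psi_1$ and using \eqref{eq:eqMD2} at $n=1$, namely $\theta_1^{-1}\delta_x Q\psi_1\le c_1\psi_1(x)$, yields $|\eta_f(x)|\le(c_1\|f\|_{\psi_1}+\alpha_1)\psi_1(x)$, hence $\|\eta_f\|_{\psi_1}<\infty$. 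As a byproduct, since $\delta_x Q\psi_1\le c_1\theta_1\psi_1(x)<\infty$, we get $\delta_x Q|\eta_f|<\infty$, so $Q\eta_f$ is well defined and satisfies $\|Q\eta_f\|_{\psi_1}<\infty$.

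\textbf{The eigenfunction identity.} Here I would identify the limit of $a_n(x):=n^{-\beta}\theta_1^{-n}\,\delta_x Q^{n+1}f$ as $n\to\infty$ in two different ways. On the one hand, writing $a_n(x)=\theta_1\,(1+1/n)^{\beta}\,\bigl((n+1)^{-\beta}\theta_1^{-(n+1)}\delta_x Q^{n+1}f\bigr)$ and applying \eqref{eq:eqMD1} at index $n+1$ (together with $\alpha_{n+1}\to0$ and $(1+1/n)^{\beta}\to1$) shows $a_n(x)\to\theta_1\eta_f(x)$. On the other hand, $a_n(x)=\delta_x Q\bigl(n^{-\beta}\theta_1^{-n}Q^n f\bigr)$, and since \eqref{eq:eqMD1} gives the pointwise bound $\bigl|n^{-\beta}\theta_1^{-n}Q^n f-\eta_f\bigr|\le\alpha_n\psi_1$, the non-negativity of $Q$ and \eqref{eq:eqMD2} yield $\bigl|a_n(x)-\delta_x Q\eta_f\bigr|\le\alpha_n\,\delta_x Q\psi_1\le\alpha_n c_1\theta_1\psi_1(x)\to0$, so $a_n(x)\to\delta_x Q\eta_f$. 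By uniqueness of the limit, $\delta_x Q\eta_f=\theta_1\eta_f(x)$ for every $x\in\XX$, i.e. $Q\eta_f=\theta_1\eta_f$.

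\textbf{Main difficulty.} There is essentially no real obstacle here; the one point requiring care is the interchange of $Q$ with the limit $n\to\infty$, and this is exactly the role of \eqref{eq:eqMD2}, which simultaneously guarantees finiteness of all the $\psi_1$-integrals involved and provides a uniform-in-$n$ domination allowing the error term $\alpha_n\psi_1$ to be pushed through the kernel $Q$.
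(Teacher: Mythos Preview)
Your proof is correct and follows essentially the same route as the paper: both bound $\|\eta_f\|_{\psi_1}$ by combining \eqref{eq:eqMD1} and \eqref{eq:eqMD2} at $n=1$, and both obtain $Q\eta_f=\theta_1\eta_f$ by identifying the limit of $n^{-\beta}\theta_1^{-n}\delta_x Q^{n}f$ (or the shifted sequence $a_n$) in two ways---once directly via \eqref{eq:eqMD1}, once after extracting a single application of $Q$ and using \eqref{eq:eqMD2} to pass the error $\alpha_n\psi_1$ through the kernel. Your indexing (working with $Q^{n+1}f$ and the factor $(1+1/n)^\beta$) is a minor cosmetic variant of the paper's (working with $Q^n f$ and the factor $((n-1)/n)^{-\beta}$), but the mechanism is identical.
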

    
    \begin{proof}[Proof of Lemma~\ref{lem:etaf-eigenfunction}]
    The first assertion is an immediate consequence of~\eqref{eq:eqMD1} and~\eqref{eq:eqMD2} with $n=1$. Let us now prove the second one. 
            For all $x\in \XX$, we have, for all $n\geq 1$,
        \begin{align*}
            \left|n^{-\beta}\theta_1^{-n}\delta_x Q^n f-\theta_1^{-1}Q\eta_f(x)\right|
            &\leq \theta_1^{-1}\int_{\XX}\delta_xQ(\mathrm dy) \left|n^{-\beta}\theta_1^{-(n-1)}\delta_y Q^{n-1} f-\eta_f(y)\right|\\
            &\leq \theta_1^{-1}\int_{\XX}\delta_xQ(\mathrm dy) \left|(n-1)^{-\beta}\theta_1^{-(n-1)}\delta_y Q^{n-1} f-\eta_f(y)\right|\\
            &\qquad+\theta_1^{-1}\left(\left(\frac{n-1}{n}\right)^{-\beta}-1\right)\int_{\XX}\delta_xQ(\mathrm dy)\left|n^{-\beta}\theta_1^{-(n-1)}\delta_y Q^{n-1} f\right|\\
            &\leq \theta_1^{-1}\alpha_{n-1}\delta_xQ\psi_1+\theta_1^{-1}\left(\left(\frac{n-1}{n}\right)^{-\beta}-1\right)\left(\eta_f(x)+\alpha_{n-1}\psi_1(x)\right),
        \end{align*}
        where we used~\eqref{eq:eqMD1}. We deduce that $ n^{-\beta}\theta_1^{-n}\delta_x Q^n f$ converges to $\theta_1^{-1}Q\eta_f(x)$ when $n\to+\infty$, and hence, using~\eqref{eq:eqMD1} again, that $\theta_1^{-1}Q\eta_f(x)=\eta_f(x)$. This concludes the proof of Lemma~\ref{lem:etaf-eigenfunction}.
    \end{proof}
    \begin{proof}[Proof of Theorem~\ref{thmMain}]
        We assume that
        $\E\left(G_0 (\psi_2^p)+(G_0\psi_1)^p\right)<+\infty$ , the
        result being trivial otherwise.
        
        According to~\eqref{eq:intro-many-to-one} and using Lemma~\ref{lem:etaf-eigenfunction}, we deduce that, for all $n\geq 0$, 
        \begin{align*}
            \mathbb E(G_{n+1}(\eta_f)\mid \mathcal F_n)=G_n Q\eta_f=\theta_1G_n\eta_f.
        \end{align*}
        This entails that the process $W^f$ is a
        martingale. We will show that it is uniformly integrable and hence that it converges almost surely to a
        limit which we denote by $W^f_\infty$.
        
        We have
        \begin{align}
            \label{eqStep0}
            \sqrt[p]{ \E\left(\left|(m+n)^{-\beta} \theta_1^{-(m+n)}G_{m+n}(f)-W^f_\infty\right|^p\right)}\leq A_{m,n}(f)+B_{m,n}(f)+C_{m,n}(f),
        \end{align}
        where
        \begin{align*}
            &A_{m,n}(f)=\sqrt[p]{\E\left(\left|(m+n)^{-\beta}\theta_1^{-(m+n)}G_{m+n}(f)-(m+n)^{-\beta}\theta_1^{-(m+n)}G_mQ^nf\right|^p\right)}\\
            &B_{m,n}(f)=\sqrt[p]{\E\left(\left|(m+n)^{-\beta}\theta_1^{-(m+n)}G_m (Q^n f)-\frac{n^\beta}{(m+n)^\beta}\theta_1^{-m}G_m\eta_f\right|^p\right)},\\
            &C_{m,n}(f)=\sqrt[p]{\E\left(\left|\frac{n^\beta}{(m+n)^\beta}W^f_{m}-W^f_\infty\right|^p\right)}.
        \end{align*}
        The terms $A_{m,n}$, $B_{m,n}$ and $C_{m,n}$ and $D_m$ are bounded in Steps~1,~2 and~3 respectively.

        \medskip\noindent\textit{Step 1. An upper bound on $A_{m,n}$ over
            $L^\infty(\psi_1)$.}  We have, for all $h\in L^\infty(\psi_1)$,
        \begin{align*}
            (m+n)^\beta\theta_1^{m+n}\, A_{m,n}(h)=\sqrt[p]{ \E\left(\left|G_{m+n}h-G_m (Q^n h)\right|^p\right)}\leq \sum_{k=0}^{n-1} \sqrt[p]{\E\left(\left|G_{m+k+1}(Q^{n-k-1}h)-G_{m+k}(Q^{n-k})h\right|^p\right)}.
        \end{align*}
        Using Lemma~\ref{prop:suffcondMD2} with $g=Q^{n-k-1}h\in L^\infty(\psi_1)$, we deduce that
        \begin{align}\label{eq:bound_A}
            (m+n)^\beta\theta_1^{m+n}\,A_{m,n}(h)&\leq c_0\sqrt[p]{\mathbb E\left(G_0^{(p)}(\psi_2^p)\right)}\,\sum_{k=0}^{n-1} \gamma_{m+k}\theta_1^{m+k}\|Q^{n-k-1}h\|_{\psi_1}.
        \end{align}
        According to~\eqref{eq:eqMD2}, we have
        $\|Q^{n-k-1} h\|_{\psi_1}\leq
        c_1\,(n-k-1)^{\beta}\,\theta_1^{n-k-1}\|h\|_{\psi_1}$, and hence 
        \begin{align}
            (m+n)^\beta\theta_1^{m+n}\,A_{m,n}(h) &\leq \|h\|_{\psi_1}\,c_0\sqrt[p]{\mathbb E(G_0^{(p)}(\psi_2^p))}\,c_1\,\sum_{k=0}^{n-1} \gamma_{m+k}\theta_1^{m+k}\theta_1^{n-k-1}(n-k-1)^{\beta}\nonumber\\
            &\leq \|h\|_{\psi_1}\,c_0c_1\,\Gamma_m\sqrt[p]{\mathbb E(G_0^{(p)}(\psi_2^p))} \theta_1^{n+m-1}n^\beta.\label{eq:bound_Amn}
        \end{align}
        Finally, we obtain
        \begin{align}
            \label{eqStep1}
            A_{m,n}(h)
            &\leq \|h\|_{\psi_1}\,c_0c_1\theta_1^{-1} \sqrt[p]{\E(G_0^{(p)}(\psi_2^p))}\Gamma_m.
        \end{align}
        Of course, since $f\in L^\infty(\psi_1)$, this bound also holds true
        for $h=f$.

        \medskip\noindent\textit{Step~2. An upper bound on $B_{m,n}(f)$.}
        Using~\eqref{eq:eqMD1}, we immediately obtain
        \[
        \frac{n^\beta}{(m+n)^\beta} \theta_1^{-m}\left|n^{-\beta}\theta_1^{-n}G_m (Q^n f)-G_m\eta_f\right|\leq \frac{n^\beta}{(m+n)^\beta}\theta_1^{-m}\alpha_n G_m\psi_1
        \]
        and hence
        \begin{align*}
            B_{m,n}(f)&\leq \frac{n^\beta}{(m+n)^\beta} \theta_1^{-m}\alpha_n  \sqrt[p]{\E\left(\left|G_m\psi_1\right|^p\right)}.
        \end{align*}
        But, using~\eqref{eq:bound_Amn} we have
        \[
        \sqrt[p]{\E\left(\left|G_{m}\psi_1-G_0(Q^m\psi_1)\right|^p\right)} = m^\beta \theta_1^\beta A_{0,m}(\psi_1) \leq c_0c_1\,\theta_1^{m-1}\Gamma_0 \, \sqrt[p]{\E(G_0^{(p)}(\psi_2^p))}m^\beta
        \]
        and hence
        \begin{align*}
            \sqrt[p]{\E\left((G_{m}\psi_1)^p\right)}\leq c_0c_1\,\theta_1^{m-1}\Gamma_0 \, \sqrt[p]{\E(G_0^{(p)}(\psi_2^p))}m^\beta  + \sqrt[p]{\E\left(\left|G_0(Q^m\psi_1)\right|^p\right)}
        \end{align*}
        Using~\eqref{eq:eqMD2}, we
        deduce that
        \begin{align*}
            \sqrt[p]{\E\left((G_{m}\psi_1)^p\right)}\leq  c_0c_1\,\theta_1^{m-1}\Gamma_0\, \sqrt[p]{\E(G_0^{(p)}(\psi_2^p))}m^\beta  + c_1m^\beta \theta_1^\beta\,\sqrt[p]{\E\left(\left|G_0\psi_1\right|^p\right)}
        \end{align*}
        Finally, we deduce that
        \begin{align*}
            B_{m,n}(f)&\leq \alpha_n\frac{(nm)^\beta}{(m+n)^\beta}\left( c_0c_1\theta_1^{-1}\Gamma_0  \sqrt[p]{\E(G_0^{(p)}(\psi_2^p))}  + c_1\sqrt[p]{\E\left(\left|G_0\psi_1\right|^p\right)}\right).
        \end{align*}

        \medskip\noindent \textit{Step 3. A upper bound in $L^p$ for the
            Biggins martingale $W^f$.}
        
        Since $\|\eta_f\|_{\psi_1}$, one can use
        inequality~\eqref{eq:bound_A} of Step~1 with
        $h=\eta_f$ to deduce that for all $n,m\geq 0,$
        \begin{align*}
            \sqrt[p]{\E\left(|W^f_{m+n}-W^f_m|^p\right)}&=\sqrt[p]{\E\left(|\theta_1^{-(m+n)}G_{m+n}\eta_f-\theta_1^{-m}G_m(\theta_1^{-n}Q^n\eta_f)|^p\right)}\\
            & =\theta_1^{-(m+n)} \sqrt[p]{\E\left(\left|G_{m+n}(\eta_f)-G_m(Q^n\eta_f)\right|^p\right)}\\
            & \leq \theta_1^{-(m+n)}c_0\sqrt[p]{\E(G_0^{(p)}(\psi_2^p))}\sum_{k=0}^{n-1}\gamma_{m+k}\theta_1^{m+k}\|Q^{n-k-1}\eta_f\|_{\psi_1}\\
            & \leq c_0\theta_1^{-1} \sqrt[p]{\E(G_0^{(p)}(\psi_2^p))} \|\eta_f\|_{\psi_1}\Gamma_m,
        \end{align*}
        where we used that $Q^{n-k-1}\eta_f= \theta_1^{n-k-1}\eta_f$.
        This implies in particular that $(W^f_n)_{n\in\N}$ forms a Cauchy
        sequence in the $L^p(\Omega,\P)$ space. Hence its limit $W^f_\infty$
        belongs to $L^p(\Omega,\P)$ and satisfies, for all $m\geq 0$,
        \begin{align*}
            \sqrt[p]{ \E\left(|W^f_{\infty}-W^f_m|^p\right)}&\leq  c_0\theta_1^{-1} \sqrt[p]{\E(G_0^{(p)}(\psi_2^p))} \|\eta_f\|_{\psi_1}\Gamma_m,
        \end{align*}
        which of course also implies that $W^f$ is a uniformly integrable
        martingale. Thus we have
        \begin{align*}
            C_{m,n}(f) &= \sqrt[p]{\E\left(\left|\frac{n^\beta}{(m+n)^\beta}W_m^f - W_\infty^f \right|^p\right)}\\
            & \leq \left(1 - \frac{n^\beta}{(m+n)^\beta}\right)\left(\sqrt[p]{\E\left(\left|W_m^f-W_0^f\right|^p\right)} + \sqrt[p]{\E\left(\left|W_0^f\right|^p\right)}\right) + \sqrt[p]{\E\left(\left|W_m^f-W_\infty^f\right|^p\right)}\\
            &\leq \left(1 - \frac{n^\beta}{(m+n)^\beta}\right)\left(c_0\theta_1^{-1} \sqrt[p]{\E(G_0^{(p)}(\psi_2^p))} \|\eta_f\|_{\psi_1}\Gamma_0 + \|\eta_f\|_{\psi_1}\sqrt[p]{\E\left(\left|G_0\psi_1\right|^p\right)}\right) + c_0\theta_1^{-1} \sqrt[p]{\E(G_0^{(p)}(\psi_2^p))} \|\eta_f\|_{\psi_1}\Gamma_m.
        \end{align*}
        
        \medskip Steps~1,~2,~3 and~4 and inequality~\eqref{eqStep0} allow us
        to conclude the proof of the main statement of Theorem~\ref{thmMain}.
    \end{proof}

    \begin{remark}
    \label{rem:alphamn}
        In Theorem~\ref{thmMain}, we consider a situation where all the generations have the same type space. The result can actually be applied to models where the state space depends on the generation by simply enriching the type space with the generation number. However, in this situation, one may prefer to adapt the assumption MD($f,p$) and the proof. For instance, consider the case where generation $n$ lives in a state space $\XX_n$, so that $Q$ is a kernel from $\XX_n$ to $\XX_{n+1}$ for each $n\geq 0$, and where, instead of~\eqref{eq:eqMD1}, we have
        \begin{align}
            \label{eq:eqMD1alt}
            \left|n^{-\beta}\theta_1^{-n}\delta_x Q^nf-\eta_f(x)\right|\leq \alpha_{m,n}\psi_1(x),\ \forall n\geq 1,\,x\in \XX_m, 
        \end{align}
        where, for each $m\geq 1$, $\alpha_{m,n}\to 0$ when $n\to+\infty$. Then Theorem~1 holds true with $\alpha_{m,n}$ instead of $\alpha_n$ in~\eqref{eq:eqthmMain}. An application of this consideration is provided in Section~\ref{sec:exainhomogeneous}, where we consider ergodic behaviour along genealogical lineages.
    \end{remark}

    \section{Uniform integrability of the Biggins martingale under a
    $L\log L$ criterion}

\label{secXlogX}

In this section, we provide a $L\log L$  type criterion that can be used to prove that
the Biggins martingale is uniformly integrable and that the convergence of $(G_n(f))_{n\in \mathbb N}$ holds almost surely.

As in the previous sections, we first make an assumption on the time asymptotic behavior of $Q^n$ when $n\to+\infty$.

\textbf{Assumption EB($f,\psi$) (exponential behaviour).}
We say that two measurable functions $f:\XX\to\mathbb R$ and $\psi:\XX\to (0,+\infty)$ satisfy Assumption~EB($f,\psi$) if
$\|f\|_\psi<\infty$ and there exists a sequence of non-negative
numbers $(\alpha_n)_{n\in\N}$ which converges to $0$
and a function $\eta_f:\XX\to \mathbb R$ such that, for some constant $C_1$ and $C_2$, and some $\theta_1>0$,
\begin{align}
    \label{eq:eqMixingLlogL1}
    \left|\theta_1^{-n}\delta_x Q^n f-\eta_f(x)\right|\leq \alpha_n\,C_f\,\psi(x)
    ,\quad \forall n\geq 0,\ x\in \XX;
\end{align}
and 
\begin{align}
    \label{eq:eqMixingLlogL2}
   \theta_1^{-n}\delta_x Q^n \psi\leq c_2\,\psi(x)
    ,\quad\forall n\geq 0,\,x\in \XX;
\end{align}

\medskip 

For any function $f:\XX\to\mathbb R_+$, we define 
\[
X_k^f(x):=G^x_k(f)-\mathbb E(G^x_k(f)),
\]
where $G_x^k$ is the weighted empirical distribution of the $k$-th generation of the weighted branching process, with initial position $\delta_x$.

Our second assumption relates to a $L\log L$ type criterion, as we explain below. Let $p\in(1,2]$ and recall that $Q^{(p)}$ and $G^{(p)}$ are defined in the previous section.

\medskip\noindent \textbf{Assumption H($f,k$).} We say that a measurable function $f:\XX\to\mathbb R_+$ and an integer $k\geq 1$ satisfy Assumption~H($f,k$) if  there exists $\theta_1,\rho>0$ such that
\begin{align*}
    \begin{cases}
        &\sum_{n\geq 1} \theta_1^{-n}\mathbb E(G_0)Q^ng^{(1)}_{\rho,k,n}<\infty,\\
        &\sum_{n\geq 1} \theta_1^{-pn}\mathbb E(G_0^{(p)})(Q^{(p)})^ng^{(2)}_{\rho,k,n}<\infty,
    \end{cases}    
\end{align*}
where 
\begin{align*}
    &g^{(1)}_{\rho,k,n}(x)=\mathbb E(|X_k^f(x)|\mathbf 1_{|X_k^f(x)|> \rho^n}),\\
    &g^{(2)}_{\rho,k,n}(x)=\mathbb E(|X_k^f(x)|^p\mathbf 1_{|X_k^f(x)|\leq \rho^n}).
\end{align*}

In the following two results, we recall that the Biggins martingale $W^f$ is the positive martingale defined by $W^f_n=\theta_0^{-n}\bar\mu_n(\eta_f)$ for all $n\geq 0$, where $\eta_f$ is from Assumption EB($f,\psi$).

\begin{theorem}
    \label{thm:unifBiggins}
    Let $f,\psi$ be such that EB($f,\psi$) holds true. If Assumption H($\eta_f,1$) holds true  and $\mathbb E(G_0(\eta_f))<\infty$, then the non-negative martingale $W^f$ is uniformly integrable.
\end{theorem}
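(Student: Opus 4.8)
The plan is to run a Kesten--Stigum-type truncation of the martingale increments of $W^f$ and to read off uniform integrability from the two series in Assumption~H($\eta_f,1$).

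First I would fix the structure. Arguing exactly as in Lemma~\ref{lem:etaf-eigenfunction}, but now with $\beta=0$ and using \eqref{eq:eqMixingLlogL1}--\eqref{eq:eqMixingLlogL2} in place of \eqref{eq:eqMD1}--\eqref{eq:eqMD2}, one gets $\|\eta_f\|_\psi<\infty$ and $Q\eta_f=\theta_1\eta_f$; together with $\mathbb E(G_0(\eta_f))<\infty$ this confirms that $W^f=(\theta_1^{-n}G_n(\eta_f))_{n\ge0}$ is a bona fide non-negative $(\mathcal F_n)$-martingale with $\mathbb E(W^f_n)=\mathbb E(G_0(\eta_f))$ for all $n$. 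Writing $Z_e:=\sum_i u_{ei}\eta_f(X_{ei})-Q_{X_e}\eta_f$ for a label $e$ in generation $n$, the increment is $W^f_{n+1}-W^f_n=\theta_1^{-(n+1)}\sum_e w_e Z_e$, and the key observation is that, conditionally on $\mathcal F_n$, the family $(Z_e)_e$ is independent and centred with $Z_e$ distributed as $X_1^{\eta_f}(X_e)$; the many-to-one identities $\mathbb E(G_n)=\mathbb E(G_0)Q^n$ and $\mathbb E(G_n^{(p)})=\mathbb E(G_0^{(p)})(Q^{(p)})^n$ will be used repeatedly.

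Next I would fix $\rho>0$ as in H($\eta_f,1$) and truncate each increment at level $\rho^n$: with $Z_e^{\le}=Z_e\mathbf 1_{|Z_e|\le\rho^n}$, $Z_e^{>}=Z_e\mathbf 1_{|Z_e|>\rho^n}$, and recentring via $\mathbb E(Z_e\mid\mathcal F_n)=0$, this splits $W^f_n=M_n+N_n$ into two $(\mathcal F_n)$-martingales with $M_0=W^f_0$, $N_0=0$ and
\begin{align*}
M_{n+1}-M_n&=\theta_1^{-(n+1)}\sum_e w_e\bigl(Z_e^{\le}-\mathbb E(Z_e^{\le}\mid\mathcal F_n)\bigr),\\
N_{n+1}-N_n&=\theta_1^{-(n+1)}\sum_e w_e\bigl(Z_e^{>}-\mathbb E(Z_e^{>}\mid\mathcal F_n)\bigr).
\end{align*}
It then suffices to prove that the families $\{M_n\}_n$ and $\{N_n\}_n$ are each uniformly integrable. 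For $M$, I would apply the Marcinkiewicz--Zygmund/Chatterji inequality of Lemma~\ref{prop:suffcondMD2} ($p\in(1,2]$), first over the conditionally independent sum indexed by generation $n$ and then over the martingale differences in $n$, using $\mathbb E(|Z_e^{\le}|^p\mid\mathcal F_n)=g^{(2)}_{\rho,1,n}(X_e)$ and $\mathbb E(G_n^{(p)})=\mathbb E(G_0^{(p)})(Q^{(p)})^n$, to obtain a bound of the form
\[
\mathbb E\bigl(|M_n-M_1|^p\bigr)\ \le\ C\sum_{j\ge1}\theta_1^{-pj}\,\mathbb E(G_0^{(p)})(Q^{(p)})^j g^{(2)}_{\rho,1,j},
\]
which is finite by the second line of H($\eta_f,1$); hence $(M_n-M_1)_n$ is bounded in $L^p$ with $p>1$ and so uniformly integrable, while $M_1$ is a single integrable random variable. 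For $N$, a triangle inequality together with $\mathbb E(|Z_e^{>}|\mid\mathcal F_n)=g^{(1)}_{\rho,1,n}(X_e)$ and $\mathbb E(G_n)=\mathbb E(G_0)Q^n$ yields $\mathbb E(|N_{n+1}-N_n|)\le2\theta_1^{-(n+1)}\mathbb E(G_0)Q^n g^{(1)}_{\rho,1,n}$, so the first line of H($\eta_f,1$) makes $\sum_n(N_{n+1}-N_n)$ absolutely convergent in $L^1$; then $N_n$ converges a.s.\ and in $L^1$, is dominated by an integrable random variable, and $\{N_n\}_n$ is uniformly integrable. Here and above, integrability of the first increments $M_1-M_0$ and $N_1-N_0$ comes from the crude bound $\mathbb E(|Z_e|\mid\mathcal F_n)\le2\theta_1\eta_f(X_e)$ together with $\mathbb E(G_0(\eta_f))<\infty$. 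Since $W^f_n=M_n+N_n$ is a sum of uniformly integrable families, $W^f$ is uniformly integrable.

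The main obstacle I anticipate is not a single hard estimate but the bookkeeping around the truncation and recentring: making the decomposition rigorous at the level of countably infinite conditionally independent sums (including the a priori integrability needed to split the increments), and matching the two resulting moment bounds \emph{exactly} to the two series of H($\eta_f,1$) through the many-to-one formulas for $Q$ and $Q^{(p)}$ and the identification of the conditional law of $Z_e$ with that of $X_1^{\eta_f}(X_e)$. A minor but genuine point is that the first increments $M_1-M_0$, $N_1-N_0$ are covered only by $\mathbb E(G_0(\eta_f))<\infty$ and not by the series of H, which is why one peels off $M_1$ and $N_1$ rather than $M_0$ and $N_0$.
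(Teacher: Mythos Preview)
Your proposal is correct and follows essentially the same route as the paper: the paper also writes the increment as $\theta_1^{-n-1}\sum_{e}w_e X_{e,1}^{\eta_f}$, truncates each $X_{e,1}^{\eta_f}$ at level $\rho^n$, recentres, and handles the small part via Chatterji's $L^p$ inequality together with the second series of H($\eta_f,1$), and the large part via an $L^1$ bound and the first series. Your version is in fact slightly more careful than the paper's in one respect: you explicitly peel off the first increment and cover it by $\mathbb E(G_0(\eta_f))<\infty$, whereas the paper's indexing tacitly uses the $n=0$ term of the series even though Assumption~H starts at $n\ge1$; this is harmless but your remark is well taken. One small imprecision: what you need here is the raw Chatterji inequality (as in~\cite{Chatterji1969}) applied directly to the conditionally independent centred sum, not Lemma~\ref{prop:suffcondMD2} itself, which is a consequence tailored to the setting of Section~\ref{secDescription}.
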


\begin{theorem}
\label{thm:asLlogL}
Assume that there exist some functions $f,\psi$ with $f\geq \psi$  and such that EB($f,\psi$) and H($f,k$) hold true for all $k\geq 1$.
Assume in addition that $G_0(\psi)<\infty$ holds true. 
Then $(\theta_1^{-n}G_n(f))_{n\in\mathbb N}$ converges almost surely to $W^f_\infty$.
Furthermore, for all function $g:\XX\to \mathbb R$ such that EB($g,\psi$) and H($g,k$) hold true for all $k\geq 1$, $(\theta_1^{-n}G_n(g))_{n\in\mathbb N}$ converges almost surely to $W^g_\infty$.
\end{theorem}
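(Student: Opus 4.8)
The plan is to run an Asmussen--Hering-type bootstrap. Fix a lag $k\geq 1$ and, for $n\geq k$, condition on $\mathcal F_{n-k}$ and use the branching property to write $G_n(f)=\sum_{e\in\mathbb N^{n-k}}w_e\,\widehat G^{(e)}_k(f)$, where, given $\mathcal F_{n-k}$, the $\widehat G^{(e)}_k$ are independent with $\widehat G^{(e)}_k$ distributed as $G^{X_e}_k$. Setting $X^f_k(X_e):=\widehat G^{(e)}_k(f)-\delta_{X_e}Q^kf$ (a variable with vanishing $\mathcal F_{n-k}$-conditional mean) and using $Q\eta_f=\theta_1\eta_f$ together with~\eqref{eq:eqMixingLlogL1} at step $k$, this produces the identity
\[
\theta_1^{-n}G_n(f)=W^f_{n-k}+E^{(k)}_n+F^{(k)}_n,\qquad |E^{(k)}_n|\leq\alpha_k C_f\,\theta_1^{-(n-k)}G_{n-k}(\psi),
\]
with $W^f_{n-k}=\theta_1^{-(n-k)}G_{n-k}(\eta_f)$ and $F^{(k)}_n=\theta_1^{-n}\sum_{e\in\mathbb N^{n-k}}w_e X^f_k(X_e)$. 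Since $f\geq\psi>0$ and $Q$ is non-negative, $\eta_f=\lim_n\theta_1^{-n}Q^nf\geq 0$, so $W^f$ is a non-negative martingale converging a.s.\ to $W^f_\infty$ (alternatively one may invoke Theorem~\ref{thm:unifBiggins} for uniform integrability); hence $W^f_{n-k}\to W^f_\infty$ a.s.\ for each fixed $k$.

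Next I would show $F^{(k)}_n\to 0$ a.s.\ by truncating each summand at level $\rho^{n-k}$: write $F^{(k)}_n=F^{(k),>}_n+F^{(k),\leq}_n$, where $F^{(k),>}_n$ collects the recentred contributions of $\{|X^f_k(X_e)|>\rho^{n-k}\}$ and $F^{(k),\leq}_n$ the complementary recentred piece. Using the triangle inequality and~\eqref{eq:intro-many-to-one} gives $\mathbb E|F^{(k),>}_n|\leq 2\,\theta_1^{-n}\,\mathbb E(G_0)Q^{n-k}g^{(1)}_{\rho,k,n-k}$, and, exactly as in the proof of Lemma~\ref{prop:suffcondMD2} (the von Bahr--Esseen/Chatterji inequality for conditionally independent centered summands, $p\in(1,2]$, applied after conditioning on $\mathcal F_{n-k}$), $\mathbb E|F^{(k),\leq}_n|^p\leq C_p'\,\theta_1^{-pn}\,\mathbb E(G_0^{(p)})(Q^{(p)})^{n-k}g^{(2)}_{\rho,k,n-k}$ for an absolute constant $C_p'$. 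Summing over $n$ and using the two series in Assumption~H($f,k$), both $\sum_n\mathbb E|F^{(k),>}_n|$ and $\sum_n\mathbb E|F^{(k),\leq}_n|^p$ are finite, so $\sum_n|F^{(k),>}_n|<\infty$ and $\sum_n|F^{(k),\leq}_n|^p<\infty$ a.s., whence $F^{(k)}_n\to 0$ a.s.

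It remains to close the loop. Because $0\leq G_{n-k}(\psi)\leq G_{n-k}(f)$ (this is where $f\geq\psi$ is used), the displayed identity yields $a_n\leq W^f_{n-k}+\alpha_k C_f\,a_{n-k}+F^{(k)}_n$ for $a_n:=\theta_1^{-n}G_n(f)\geq 0$. Choosing $k$ so large that $\alpha_k C_f<1$ and iterating this inequality down to the base generations $0,\dots,k-1$ --- each $a_r$ being a.s.\ finite since $\mathbb E(G_r(f))=\mathbb E(G_0)Q^rf<\infty$ by~\eqref{eq:eqMixingLlogL1}--\eqref{eq:eqMixingLlogL2} --- together with $\sup_m W^f_m<\infty$ and $\sup_m|F^{(k)}_m|<\infty$ a.s., shows $L:=\limsup_n a_n<\infty$ a.s. With $L<\infty$ and $\limsup_m\theta_1^{-m}G_m(\psi)\leq L$, taking $\limsup$ and $\liminf$ in the identity shows both $\limsup_n a_n$ and $\liminf_n a_n$ lie within $\alpha_k C_f L$ of $W^f_\infty$; letting $k\to\infty$ and using $\alpha_k\to 0$ gives $\theta_1^{-n}G_n(f)\to W^f_\infty$ a.s. The last assertion follows by repeating the same argument with $g$ in place of $f$: the fluctuations vanish by H($g,k$), the approximation error is bounded by $\alpha_k C_g\,\theta_1^{-(n-k)}G_{n-k}(\psi)$ whose $\limsup$ is $\leq L<\infty$ (already controlled through $f$), and the martingale $W^g_n=\theta_1^{-n}G_n(\eta_g)$ is bounded in $L^1$ (since $|\eta_g|\leq\|\eta_g\|_\psi\psi$ and~\eqref{eq:eqMixingLlogL2} hold) hence converges a.s.\ to $W^g_\infty$, so $\theta_1^{-n}G_n(g)\to W^g_\infty$ a.s.

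The hardest step will be the a.s.\ finiteness of $\limsup_n\theta_1^{-n}G_n(f)$: before one can use the $O(\alpha_k)$ error bound to squeeze $\theta_1^{-n}G_n(f)$ around $W^f_\infty$, one must know this limsup is a.s.\ finite, and this is exactly where the hypothesis $f\geq\psi$ enters, turning the error estimate into a self-improving Gronwall-type recursion bootstrapped from the base generations. A secondary technical point is carrying the conditional-independence bookkeeping and the attendant von Bahr--Esseen moment estimate through rigorously when summing $\mathcal F_{n-k}$-conditional moments over $n$ (and handling the finitely many small-$n$ terms absent from the series in Assumption~H).
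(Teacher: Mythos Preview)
Your proposal is correct and follows essentially the same Asmussen--Hering bootstrap as the paper: condition $k$ generations back, split off the fluctuation $F^{(k)}_n$ and show it vanishes a.s.\ via the truncation at level $\rho^{n-k}$ controlled by the two series in H($f,k$), then use EB($f,\psi$) together with $f\geq\psi$ to bound the approximation error by $\alpha_k\,\theta_1^{-(n-k)}G_{n-k}(f)$ and close the loop. Two cosmetic differences: the paper splits the fluctuation into three pieces $(S'_n-\tilde S_n)$, $(\tilde S_n-\mathbb E(\tilde S_n\mid\mathcal F_n))$, $\mathbb E(\tilde S_n\mid\mathcal F_n)$ rather than your two recentred pieces (these are rearrangements of the same terms), and the paper is terser at the end---your explicit Gronwall-type iteration establishing $\limsup_n\theta_1^{-n}G_n(f)<\infty$ a.s.\ spells out a step the paper compresses into the single sentence ``since $\alpha_k$ can be chosen arbitrarily small''. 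One small remark: your claim that $W^g$ is ``bounded in $L^1$'' uses $\mathbb E[G_0(\psi)]<\infty$, whereas the hypothesis is only $G_0(\psi)<\infty$ a.s.; this is handled (as in the paper) by reducing without loss of generality to deterministic $G_0$, or alternatively by noting that the already-proved bound $\sup_n\theta_1^{-n}G_n(f)<\infty$ a.s.\ gives $\sup_n|W^g_n|<\infty$ a.s., which for a martingale forces a.s.\ convergence.
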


Note that, combining the first theorem, the second one and Scheffé's Lemma, we deduce that, if the assumptions of the two theorems hold true, then $(\theta_0^{-n}\bar\mu_n(f))_{n\in\mathbb N}$ converges in $L^1$ to $W_\infty^f$. 

Let us also mention that our proof follows partly the method developed Asmussen and Hering~\cite{AsmussenHering1976} and by Jagers in~\cite{Jagers1989}. Our main contributions is the development of new results in the setting of weighted branching processes and to provide refinement  to adapt the  convergence result to situations where the semi-group $Q$, defined in~\eqref{eq:intro-many-to-one}, does not converge in total variation norm, admits  a non-exponential renormalizing factor, and when the limit is not a one dimensional operator.

Before turning to the proof of the two theorems, we start with a Lemma which relates the above condition to $L\log L$ type criteria. 
We define for $a\geq 1$ the function 
\[x\in \R_+ \mapsto \loga(x) = \begin{cases}
    \left(\frac{a}{\mathrm{e}}\right)^a x,& \text{ if } x<\mathrm{e}^a,\\ \log(x)^a,& \text{ if }x\geq \mathrm{e}^a.
\end{cases}\]

\begin{lemma}
\label{lem:lemma3} Assume that  there exist two constants $\theta_1,\theta_2>0$ such that $\theta_1^p>\theta_2$, two functions $\psi,\varphi: \XX\longrightarrow \mathbb R$, a  non-negative measure $\pi$, a non-negative sequence $(\gamma_n)_{n\geq 0}$, such that, for all $g:E\to \mathbb R_+$ and $n\geq 1$,
    \begin{align}
        &\theta_1^{-n}\mathbb E(G_0)Q^n g\leq \pi g+\gamma_n\|g\|_\varphi\label{eq:lem3eq1}\\
        &\theta_2^{-n}\mathbb E(G_0^{(p)})(Q^{(p)})^n g\leq \pi g+\gamma_n\|g\|_\varphi.\label{eq:lem3eq2}
    \end{align}
    Assume also that, for some $\alpha \geq 1$ such that $\sum_{n\geq 1} \gamma_n/n^\alpha <+\infty$ and some $k\in \mathbb N$ such that $Q\psi \in L^\infty(\psi)$,
    \begin{align*}
        \forall x\in \XX,\,\mathbb E\left(G_1^x(\psi) \logalpha (G_1^x(\psi))\right) \leq \varphi(x),\quad
        \psi(x) \logalpha(\psi(x))\leq \varphi(x),\\
        \forall 0\leq j \leq k-1,\,\pi(Q^j\varphi)<+\infty,\,\pi\left((Q^{(p)})^j\psi\right)<+\infty,
    \end{align*}
    and if either $(\gamma_n)_{n\in\NN}$ is null or
    \begin{equation}
    \label{eq:lemma3ifgammanonnull}
     \forall 0\leq j\leq k-1,\:\|Q^j\varphi\|_\varphi <+\infty,\,\|(Q^{(p)})^j\psi\|_\varphi<+\infty,
    \end{equation}%\denis{Pour $j=0$ aussi? à vérifier précisément}
    Then, for all function $f\in L^\infty(\psi)$, Assumption~H($f,k$) holds true.
\end{lemma}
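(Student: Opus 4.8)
\textbf{Proof plan for Lemma~\ref{lem:lemma3}.}

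The goal is to bound the two series defining Assumption~H($f,k$), namely $\sum_n \theta_1^{-n}\mathbb E(G_0)Q^n g^{(1)}_{\rho,k,n}$ and $\sum_n \theta_1^{-pn}\mathbb E(G_0^{(p)})(Q^{(p)})^n g^{(2)}_{\rho,k,n}$, where the truncation functions involve $X_k^f(x)=G_k^x(f)-\mathbb E(G_k^x(f))$. The plan is to first reduce everything to the control of $G_k^x(\psi)$ using $f\in L^\infty(\psi)$: since $|X_k^f(x)|\le \|f\|_\psi\,(G_k^x(\psi)+\mathbb E(G_k^x(\psi)))$, and $\mathbb E(G_k^x(\psi))=\delta_xQ^k\psi\le C\psi(x)$ by iterating $Q\psi\in L^\infty(\psi)$ (this is where the hypothesis $Q\psi\in L^\infty(\psi)$ and $k$ fixed enter), it suffices to bound the analogous series with $g^{(1)}_{\rho,k,n}$, $g^{(2)}_{\rho,k,n}$ replaced by $h^{(1)}_n(x):=\mathbb E(G_k^x(\psi)\mathbf 1_{G_k^x(\psi)>c\rho^n})$ and $h^{(2)}_n(x):=\mathbb E(G_k^x(\psi)^p\mathbf 1_{G_k^x(\psi)\le c\rho^n})$ for a suitable constant $c$ (possibly at the cost of shrinking $\rho$, which is harmless since $\rho$ is a free parameter in Assumption~H).

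Next I would apply the two pointwise bounds \eqref{eq:lem3eq1}--\eqref{eq:lem3eq2} to $g=h^{(j)}_n$, which reduces each term of the series to $\pi(Q^{k-1}h^{(j)}_n)+\gamma_n\|Q^{k-1}h^{(j)}_n\|_\varphi$ after one more step handling the extra $k-1$ applications of $Q$ (resp.\ $Q^{(p)}$): here one writes $G_k^x(\psi)$ via the branching property as a sum over generation-$1$ particles of independent copies, uses subadditivity of $t\mapsto t\mathbf 1_{t>R}$ up to constants, and pushes the truncated quantities through $Q^{k-1}$ using the finiteness hypotheses $\pi(Q^j\varphi)<\infty$, $\pi((Q^{(p)})^j\psi)<\infty$ and, when $(\gamma_n)$ is nonnull, $\|Q^j\varphi\|_\varphi<\infty$, $\|(Q^{(p)})^j\psi\|_\varphi<\infty$ from \eqref{eq:lemma3ifgammanonnull}. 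The key estimate is then the classical $L\log L$ exchange: for a nonnegative random variable $Z$ with $\mathbb E(Z\,\loga(Z))<\infty$ one has $\sum_{n\ge 1} n^{a-1}\theta^{-n}\mathbb E(Z\mathbf 1_{Z>\rho^n}/\rho^n \cdot \rho^n)$-type sums controlled by $\mathbb E(Z\,\loga(Z))$ once $\theta>1$ and $\rho$ is chosen appropriately; more precisely, choosing $\rho$ so that the geometric weights $\theta_1^{-n}$ (resp.\ $\theta_2^{-n}/\theta_1^{-pn}\cdot$, using $\theta_1^p>\theta_2$) combine with the polynomial factor $n^{\alpha-1}$ coming from the summability $\sum \gamma_n/n^\alpha<\infty$, the sum $\sum_n (\text{weight}_n)\,h^{(1)}_n(x)$ is bounded by a constant times $\mathbb E(G_k^x(\psi)\loga(G_k^x(\psi)))$ and similarly $\sum_n (\text{weight}_n)\,h^{(2)}_n(x)$ by a constant times $\mathbb E(G_k^x(\psi)\loga(G_k^x(\psi)))$ (the $p$-th power truncated below behaves like the $1$-st power with a logarithmic correction). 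Finally I would invoke the hypothesis $\mathbb E(G_1^x(\psi)\loga(G_1^x(\psi)))\le\varphi(x)$ together with $\psi(x)\loga(\psi(x))\le\varphi(x)$ to bound $\mathbb E(G_k^x(\psi)\loga(G_k^x(\psi)))$ by a constant times $Q^{k-1}\varphi(x)$ (again using the branching decomposition and a convexity/subadditivity argument for $\loga$, noting that $\loga$ is concave so $\loga(a+b)\le \loga(a)+\loga(b)$ up to the linear part near $0$), and then close the estimates with $\pi(Q^j\varphi)<\infty$ and $\|Q^j\varphi\|_\varphi<\infty$.

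The main obstacle I anticipate is the bookkeeping in the $L\log L$ exchange: one must track how the precise form of $\loga$ (linear below $\mathrm e^a$, $\log^a$ above) interacts with the geometric decay $\theta^{-n}$, the polynomial factor $n^{\alpha-1}$ from $\sum\gamma_n/n^\alpha<\infty$, and the threshold $\rho^n$, and verify that $\rho$ can indeed be tuned so that $\sum_n n^{\alpha-1}\theta^{-n}\rho^n\,\mathbf 1_{\rho^n<t}$ and $\sum_n n^{\alpha-1}\theta^{-n}\rho^{(p-1)n}\,\mathbf 1_{\rho^n\ge t}$ are both $O(\loga(t))$ uniformly in $t$. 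The condition $\theta_1^p>\theta_2$ is exactly what makes the second (the $Q^{(p)}$) series summable with the same choice of $\rho$, so the delicate point is to pick a single $\rho\in(1,\theta_1)$ with $\rho^p<\theta_1^p/\theta_2\cdot\theta_1^{?}$... making these inequalities compatible is the crux. A secondary, more routine obstacle is handling the $k-1$ extra applications of $Q$ and $Q^{(p)}$ cleanly, for which I would first prove the case $k=1$ in full and then note that replacing $f$ by the (bounded-in-$\psi$) functions obtained after pushing through finitely many steps reduces the general case to it, using precisely the finiteness hypotheses listed in the statement.
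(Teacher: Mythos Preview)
Your plan is essentially the paper's proof: reduce $|X_k^f|$ to $G_k^x(\psi)$ via $f\in L^\infty(\psi)$ and $Q\psi\in L^\infty(\psi)$, bound $\mathbb E\bigl(G_k^x(\psi)\loga G_k^x(\psi)\bigr)$ by an iteration on $k$, then apply \eqref{eq:lem3eq1}--\eqref{eq:lem3eq2} directly to $g=g^{(i)}_{\rho,k,n}$ and perform the $L\log L$ exchange.

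Three points where you are vague and the paper is explicit. First, the paper uses $\loga$ with \emph{two different exponents}: $a=1$ for the $\pi$-term (so $\sum_n\pi(g^{(1)}_{\rho,k,n})\lesssim\pi\bigl(\mathbb E(|X_k^f|\logone|X_k^f|)\bigr)$ for any $\rho>1$) and $a=\alpha$ for the $\gamma_n\|\cdot\|_\varphi$-term (Markov with $\logalpha$ gives $\|g^{(1)}_{\rho,k,n}\|_\varphi\lesssim n^{-\alpha}\|\mathbb E(|X_k^f|\logalpha|X_k^f|)\|_\varphi$, after which $\sum\gamma_n/n^\alpha<\infty$ closes the sum). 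Second, the choice of $\rho$ you flag as the crux is simply $\rho=(\theta_1^p/\theta_2)^{1/(p-1)}$: then $(\theta_2/\theta_1^p)^n\rho^{(p-1)n}=1$, so the geometric factor cancels exactly against the worst growth of $|X_k^f|^{p-1}$ on $\{|X_k^f|\le\rho^n\}$, collapsing the $p$-th moment in $g^{(2)}$ back to a first moment. Third, the $Q^j\varphi$ and $(Q^{(p)})^j\psi$ terms in the hypotheses do \emph{not} arise from ``pushing truncated quantities through $Q^{k-1}$''; the bounds \eqref{eq:lem3eq1}--\eqref{eq:lem3eq2} are applied to $g^{(i)}_{\rho,k,n}$ itself, and the $Q^j$ factors come instead from the branching iteration $\mathbb E(G_j^x(\psi)\loga G_j^x(\psi))\le K\,\mathbb E(G_{j-1}^x(\psi)\loga G_{j-1}^x(\psi))+K\bigl(Q^{j-1}\varphi(x)+(Q^{(p)})^{j-1}\psi(x)\bigr)$, obtained via the inequality $\mathbb E(S\loga S)\le\mathbb E(S)\loga\mathbb E(S)+\sum_i\mathbb E(X_i\loga X_i)$ for independent summands. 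Your proposed reduction ``prove $k=1$, then replace $f$'' would not generate these terms in the right form.
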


\begin{remark}
    The second set of assumptions in Lemma~\ref{lem:lemma3} is used to prove that
    for some function $f:\XX\to\mathbb R_+$, $k\geq 1$ and $\alpha\geq 0$ such that $\sum_{n\geq 1} \gamma_n/n^{1+\alpha}<\infty$,
    \begin{align*}
        &\int_\XX\pi(\mathrm dx)\,\mathbb E(X_k^f\log_+ X_k^f\mid G_0=\delta_x)<\infty,\\
        &\|x\mapsto \mathbb E(|X_k^f\log_+^{1+\alpha} X_k^f\mid G_0=\delta_x)\|<\infty.
    \end{align*}
    It is thus possible to obtain a possibly more general criterion by replacing the second set of assumptions of Lemma~\ref{lem:lemma3} with this, in a similar fashion as in~\cite{bansaye2025strong}. However, in our case, there doesn't seem to be situations where this is easy to prove by a direct way and we thus prefer to keep the more practical formulation of Lemma~\ref{lem:lemma3}.
\end{remark}

Let us now provide and comment on some sufficient conditions for~\eqref{eq:lem3eq1} and~\eqref{eq:lem3eq2}.

\begin{example}
To keep the situation simple, let us consider the situation where, for all $x\in \XX$, the law of $(u_i,x_i)_{i\in\mathbb N}$ under $K(x,\cdot)$ is such that the $u_i$ are in $[0,1]$ almost surely (this includes the classical branching process case, where $u_i\equiv 1$).
Let us assume that Assumption~EB($f,\psi$) is in force with $\theta_1>1$.% and that
%there exist a measure $\pi_0$ on $X$ such that $\pi_0 Q\leq \theta_1 Q$. 

We obtain that~\eqref{eq:lem3eq1} and~\eqref{eq:lem3eq2} hold in the following situations.
\begin{enumerate}
    \item It holds if $\mathbb E(G_0)\varphi<\infty$ and, for all function measurable $g:\XX\to \mathbb R$ with $\|g\|_\varphi<\infty$, we have
    \begin{equation*}
        \left|\theta_1^{-n} \delta_x Q g -\eta(x)\pi(g)\right|\leq \gamma_n\|g\|_\varphi\varphi(x),\quad\forall n\geq 0,\ x\in \XX,
    \end{equation*}
    with $\|\eta\|_\varphi<\infty$ and $\pi(\varphi)<\infty$. Indeed, in this case, $\pi Q=\theta_1 Q$ and hence, setting e.g. $\theta_2=\theta_1^{p/2}<\theta_1^p$, we have, for all non-negative $g$,
    \begin{align*}
        &\theta_1^{-n}\mathbb E(G_0)Q^n g\leq \mathbb E(G_0)\eta \,\pi(g)+\gamma_n \|g\|_\varphi,\\
        &\theta_2^{-n}\mathbb E(G_0)(Q^{(p)})^n g\leq \theta_1^{-n}\mathbb E(G_0)Q^n g\leq \mathbb E(G_0)\eta \,\pi(g)+\gamma_n \|g\|_\varphi.
    \end{align*}
    \item It holds if there exists a measure $\pi$ on $\XX$ such that $\pi Q\leq \theta_1 \pi$ and  $\mathbb E(G_0)\leq \pi$, in which case one can take $\gamma_n\equiv 0$. In particular, this shows that, if $\pi Q\leq \theta_1 \pi$, then it holds true for $G_0=\delta_{x_0}$ for $\pi$-almost all $x_0$. 

    Compared to the previous case, this is particularly useful when convergence in~\eqref{eq:eqMixingLlogL1} does not hold in the strong weighted total variation norm $\|\cdot\|_\psi$. This is the case for instance if $\theta_1^{-n}\delta_x Q^n$ converges to $\eta(x)\pi$ uniformly in $W_1$-Wasserstein distance (see e.g.~\cite{ChampagnatEtal2025}).

    Also, since $(\gamma_n)_{n\in \mathbb N}$ is null and contrarily to the previous case, the $\psi$-norm of the function $x\mapsto\mathbb E\left(G_1^x(\psi) \logalpha (G_1^x(\psi))\right)$ may be infinite: the $L\log L$-type criterion is only required on average (relatively to $\pi$).
    
    \item As an extension to the previous case, if there exists a measure $\pi$ on $\XX$ such that $\pi Q\leq \theta_1 \pi$ and  such that $\mathbb E(G_0)Q^{n_0}$ admits a bounded density with respect to $\pi$ (which may be the case when $Q$ is regularizing), then the weighting branching process with initial position $G_{n_0}$ (note that $\mathbb E(G_{n_0})=\mathbb E(G_0)Q^{n_0}$) satisfies the two first conditions of Lemma~\ref{lem:lemma3} with $\gamma_n\equiv 0$.

    As in the previous case, this allows for situation where the $\psi$-norm of the function $x\mapsto\mathbb E\left(G_1^x(\psi) \logalpha (G_1^x(\psi))\right)$ is infinite.

    \item In the two previous cases, we assumed that $\mathbb E(G_0)Q^{n_0}$ admits a bounded density with respect to $\pi$, which may be inconvenient in some situations. To see this, assume that that $\pi(\mathrm dx)=f_\pi(x)\Lambda(\mathrm dx)$ for some reference measure $\Lambda$ on $\XX$ and with $f_\pi(x)>0$ but $\inf f_\pi=0$, and that $\mathbb E(G_0)Q^{n_0}(\mathrm dx)=f_{n_0}(x)\Lambda(\mathrm dx)$ with $f_{n_0}$ a non-negative measurable function (which is typically the case when $Q$ is regularising). This setting corresponds to the previous case if and only if $\sup f_{n_0}/f_{\pi}<+\infty$. 

    In the case where $\sup f_{n_0}/f_{\pi}=+\infty$, let $\XX_\ell:=\{f_{n_0}/f_{\pi}\leq \ell\}$ for all $\ell\geq 1$, so that $\XX=\cup_{\ell\geq 1} \XX_\ell$. Then, for any fixed $\ell$, the process with initial position $G_{n_0}(\cdot\cap (\XX\setminus \XX_\ell))$ satisfies conditions~\eqref{eq:lem3eq2} and~\eqref{eq:lem3eq2} with $\gamma_n\equiv 0$, since it enters the setting of the previous case. Hence if the $L\log L$ criterion is satisfied (only for averages with respect to $\pi$ since $\gamma_n\equiv 0$), then this process rescaled by $\theta_1^{-n}$ converges almost surely according to Theorems~\ref{thm:unifBiggins} and~\ref{thm:asLlogL}.

    We deduce that the original process converges almost surely if the following condition is satisfied: 
    \begin{align*}
        \mathbb P\left(\exists \ell\geq 1,\ G_{n_0}(\XX\setminus \XX_\ell)=0\right)=1,
    \end{align*}
    which is the case for instance if,
    almost surely,
    the number of individuals in generation $n_0$ is finite. 
    \item Finally, in some situations the process may lack the existence of reference measure $\pi$, in which case the previous sufficient conditions are not helpfull. However,  if~\eqref{eq:lemma3ifgammanonnull} holds true for $\pi=0$ and $\alpha>1$, then one can take $(\gamma_n)_{n\in\mathbb N}$ constant and Conditions~\eqref{eq:lem3eq1} and ~\eqref{eq:lem3eq2} reduce to
    \[
    \theta_1^{-n}\mathbb E(G_0)Q^n g\leq C\|g\|_\varphi,
    \]
    which does not require the existence of a reference measure. This criterion should be particularly helpful for time-inhomogeneous processes, periodic processes or processes for which only some marginals/projections converge.
\end{enumerate}
\end{example}

\bigskip

Let us now prove the main results of this section. We first prove Lemma~\ref{lem:lemma3}, then Theorem~\ref{thm:unifBiggins} and~\ref{thm:asLlogL}.

\medskip
The function $\log^{(a)}$ was studied for $a=1$ in \cite{AsmussenHering1976}, in the context of branching processes without weights. We give, without proof, the following extension.

\begin{lemma}\label{lem:logalpha}
For all $a\geq 1$, the function $x\mapsto \loga (x)$ is concave, and the application $x\mapsto x\loga (x)$ is increasing and convex. In addition, if $X_1,\ldots ,X_N$ is a sequence of independent random variables, and $S=X_1+\ldots+X_N,$ then 
\[\mathbb E(S\loga (S)) \leq \mathbb E(S) \loga (\mathbb E(S)) + \sum_{i=1}^N \mathbb E\left(X_i \loga (X_i)\right),\]
and for all $a\geq 1$ and $p>0$ there exists $C_{a,p}>0$ such that for all $x\geq 0$
\[\loga(x)\leq C_{a,p} (1 + x^p).\]
\end{lemma}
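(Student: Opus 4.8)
\textbf{Plan of proof for Lemma~\ref{lem:logalpha}.}
The statement bundles four independent claims about $\loga$, so the plan is to treat them in turn, starting with the elementary analytic properties and finishing with the subadditivity-type inequality for sums of independent variables, which is the only part requiring a genuine argument. Throughout I fix $a\geq 1$ and write $u=\mathrm e^a$ for the gluing point, where the linear piece $(a/\mathrm e)^a x$ and the logarithmic piece $\log(x)^a$ meet; a first routine check is that the two pieces agree in value at $x=u$ (both equal $a^a$) and in first derivative (the left slope is $(a/\mathrm e)^a$ and the right derivative of $\log(x)^a$ at $x=\mathrm e^a$ is $a(\log u)^{a-1}/u=a\cdot a^{a-1}/\mathrm e^a=(a/\mathrm e)^a$), so $\loga$ is $C^1$ across the junction.

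First I would prove concavity of $x\mapsto\loga(x)$. On $[0,u)$ the function is linear, hence concave; on $[u,\infty)$ I differentiate twice: with $h(x)=\log(x)^a$ one gets $h''(x)=a\log(x)^{a-2}(a-1-\log x)/x^2$, which is $\le 0$ exactly when $\log x\ge a-1$, i.e.\ when $x\ge \mathrm e^{a-1}$; since $u=\mathrm e^a\ge \mathrm e^{a-1}$ this holds on the whole logarithmic range, so each piece is concave and, by the matching of values and slopes at $u$, the glued function is globally concave. Next, for $x\mapsto x\loga(x)$: on $[0,u)$ it equals $(a/\mathrm e)^a x^2$, convex; on $[u,\infty)$ I set $k(x)=x\log(x)^a$ and compute $k''(x)=a\log(x)^{a-2}(a-1+\log x)/x$, which is $\ge0$ for $x\ge u$ since there $\log x\ge a>1-a$; monotonicity on $[u,\infty)$ follows from $k'(x)=\log(x)^a+a\log(x)^{a-1}>0$, while on $[0,u)$ the quadratic is increasing, and the matching of the two pieces at $u$ (both value and derivative, inherited from the $C^1$ property of $\loga$ after multiplying by $x$) yields that $x\mapsto x\loga(x)$ is increasing and convex on all of $\R_+$. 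The final growth bound $\loga(x)\le C_{a,p}(1+x^p)$ is immediate: on $[0,u)$ the linear function is dominated by a constant times $1+x^p$, and on $[u,\infty)$ one uses $\log(x)^a=o(x^p)$ as $x\to\infty$ together with continuity to absorb everything into a single constant $C_{a,p}$.

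The substantive step is the inequality
\[
\E\bigl(S\loga(S)\bigr)\le \E(S)\loga\bigl(\E(S)\bigr)+\sum_{i=1}^N\E\bigl(X_i\loga(X_i)\bigr),
\]
for independent non-negative $X_i$ with $S=\sum_i X_i$. The plan is induction on $N$, so the heart of the matter is the two-variable case $S=X+Y$ with $X,Y$ independent; I would first establish the pointwise inequality $(x+y)\loga(x+y)\le (x+y)\loga(x)+y\loga(y)$ for $x,y\ge0$ via concavity of $\loga$ (which gives $\loga(x+y)\le \loga(x)+\loga(y)$ on the range where $\loga$ is subadditive, using $\loga(0)\ge0$ and concavity with $\loga$ increasing), then take expectations and use independence to replace $\E((X+Y)\loga(X))$ by $\E(X\loga(X))+\E(Y)\E(\loga(X))\le\E(X\loga(X))+\E(Y)\loga(\E(X))$, invoking Jensen on the concave $\loga$, and finally apply Jensen once more in the combined term to reach $\E(S)\loga(\E(S))$. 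The delicate point, and the one I expect to be the main obstacle, is that exact subadditivity $\loga(x+y)\le\loga(x)+\loga(y)$ can fail on the linear piece near the origin (where $\loga$ is a genuine linear function through $0$, hence additive rather than strictly subadditive), so the pointwise bound must be verified piecewise with care at the junction $x+y=u$; this is precisely why the linear piece of $\loga$ was chosen with the slope $(a/\mathrm e)^a$ that makes $\loga$ concave and $C^1$, and checking that this choice makes the two-variable inequality hold uniformly across the gluing point is where the argument needs the most attention before the induction closes routinely.
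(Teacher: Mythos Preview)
The paper actually states this lemma \emph{without proof}, pointing to Asmussen and Hering for the case $a=1$, so there is no paper argument to compare against. Your treatment of the purely analytic properties (the $C^1$ match at $x=\mathrm e^a$, concavity of $\loga$, convexity and monotonicity of $x\mapsto x\loga(x)$, and the growth bound) is correct.

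The genuine gap is in the sum inequality. Your proposed pointwise bound
\[
(x+y)\,\loga(x+y)\;\le\;(x+y)\,\loga(x)+y\,\loga(y)
\]
is \emph{false}. With $a=1$, $x=1$, $y=2$ one has $x+y=3>\mathrm e$, so the left-hand side is $3\log 3\approx 3.30$, while the right-hand side is $3\cdot\tfrac{1}{\mathrm e}+2\cdot\tfrac{2}{\mathrm e}=7/\mathrm e\approx 2.58$. Your worry that subadditivity of $\loga$ ``can fail on the linear piece'' is also misplaced: concavity together with $\loga(0)=0$ yields $\loga(x+y)\le\loga(x)+\loga(y)$ everywhere (with equality when $x+y<\mathrm e^a$). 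The problem is not subadditivity of $\loga$; it is the stronger inequality you wrote down, which does not hold.

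A clean argument avoids both the bad pointwise inequality and the induction. Write $S\loga(S)=\sum_{i} X_i\loga(S)$ and, for each $i$, use subadditivity with the split $S=X_i+(S-X_i)$ together with independence of $X_i$ and $S-X_i$:
\[
\E\bigl[X_i\,\loga(S)\bigr]\;\le\;\E\bigl[X_i\,\loga(X_i)\bigr]+\E[X_i]\,\E\bigl[\loga(S-X_i)\bigr]\;\le\;\E\bigl[X_i\,\loga(X_i)\bigr]+\E[X_i]\,\loga\bigl(\E[S]\bigr),
\]
the last step being Jensen for the concave $\loga$ followed by monotonicity. Summing over $i$ gives the inequality directly.
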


\begin{proof}[Proof of Lemma~\ref{lem:lemma3}]%\denis{preuve à déplacer, peut-être le lemme aussi pour le rapprocher des exemples après le théorème}
We start by showing in Step~1 that 
\[\int_\XX\pi(dx)\mathbb E\left(|X_k^f(x)|\log^{(1)}\left(|X_k^f(x)|\right)\right)<+\infty,\]
and that, for $\alpha\geq 1$ such that $\sum_{n\geq 1} \gamma_n/n^{\alpha}<\infty$, we have
\[\left\|x\mapsto \mathbb E\left(|X_k^f(x)|\logalpha \left(|X_k^f(x)|\right)\right)\right\|<+\infty.\]
Then we prove that Assumption~$H(f,k)$ holds in Step~2.

\medskip\noindent{\textit{Step~1}} It is enough to prove that for $a\in \{1,\alpha\}$ there exists $K_1>0$ such that,
\begin{equation}\label{eq:LemmeLlogL}
    \mathbb E\left(|X_k^f(x)|\loga \left(|X_k^f(x)|\right)\right) \leq \mathbb E\left(G_k^x(\psi) \loga (G_k^x(\psi))\right) + K_1 \psi(x)\loga (\psi(x)).
\end{equation}

Indeed, let us assume first that \eqref{eq:LemmeLlogL} holds true. Then, by the branching property we have for all $1\leq j \leq k$,
\[G_j^x(\psi) = \sum_{|e|=j-1}w_eG_1^{X_e}(\psi),\]
where for $e\neq e'$, $G_1^{X_e}$ and $G_1^{X_{e'}}$ are independent conditioned on $\mathcal F_{j-1}$. Thus, by Lemma~\ref{lem:logalpha}, 
\begin{align}\label{eq:bound_Step1}
  \mathbb E\left.\left(G_j^x(\psi)\loga (G_j^x(\psi))\, \right|\, \mathcal F_{j-1}\right) & \leq \mathbb E(G_j^x(\psi)\mid \mathcal F_{j-1}) \loga\left(\mathbb E(G_j^x(\psi)\mid \mathcal F_{j-1})\right) +  \sum_{|e|=j-1} \mathbb E\left.\left(w_eG_1^{X_e}\loga \left(w_eG_1^{X_e}(\psi)\right) \,\right|\,\mathcal F_{j-1}\right) \nonumber\\
  &\leq K_2 G_{j-1}(\psi)\loga (G_1^x(\psi))+\sum_{|e|=j-1} \mathbb E\left.\left(w_eG_1^{X_e}\loga \left(w_eG_1^{X_e}(\psi)\right) \,\right|\,\mathcal F_{j-1}\right),
  \end{align}
for some $K_2>0$. The sum on the right-hand side is bounded according to Lemma~\ref{lem:logalpha} and by assumption by

\begin{align*}
    \sum_{|e|=j-1} C_{a,p-1}\mathbb E\left.\left(w_eG_1^{X_e}(\psi)+w_e^pG_1^{X_e}(\psi)\right| \mathcal F_{j-1}\right)&+\sum_{|e|=j-1}\mathbb E\left.\left(w_eG_1^{X_e}(\psi)\loga \left(G_1^{X_e}(\psi)\right) \right| \mathcal F_{j-1}\right)\\
    &\leq \sum_{|e|=j-1} C_{\alpha,p-1}(w_e+w_e^p)Q\psi(X_e) + \sum_{|e|=j-1} w_e\varphi(X_e)\\
    &\leq K_3 \left(G_{j-1}^x(\psi)+G_{j-1}^{(p),x}(\psi) +G_{j-1}^x(\varphi)\right),
\end{align*}
for some $K_3>0$.

Thus, by taking expectation in~\eqref{eq:bound_Step1} we obtain that there exists $K_4>0$
\[\mathbb E\left(G_j^x(\psi)\loga (G_j^x(\psi))\right) \leq \mathbb E\left(G_{j-1}^x(\psi)\loga (G_{j-1}^x(\psi)\right) + K_4 \left(\psi(x) + \left(Q^{(p)}\right)^{j-1}\psi(x) + Q^{j-1}\varphi(x)\right),\]
where we used that $Q\psi \in L^\infty(\psi)$ by assumption. An iteration argument combined with~\eqref{eq:LemmeLlogL} yields
\begin{align*}\mathbb E\left(\left|X_k^f(x)\right|\loga \left(\left|X_k^f(x)\right|\right)\right) &\leq K_5\left(\sum_{j=0}^{k-1}\left(\left(Q^{(p)}\right)^j\psi(x) + Q^j\varphi(x)\right) + \psi(x)\loga(\psi(x))\right)\\
& \leq K\sum_{j=0}^{k-1}\left(\left(Q^{(p)}\right)^j\psi(x) + Q^j\varphi(x)\right),
\end{align*}
for $K_5,K>0$, where we used that $a\in \{1,\alpha\}$, and that $\psi\logone \psi \leq \psi\logalpha \psi \leq \varphi = Q^0\varphi'$, by assumption.

Hence, setting $a=1$ and integrating with respect to $\pi$, we have 
\begin{align*}
    \int_\XX \pi(dx)\mathbb E\left(\left|X_k^f(x)\right| \logone\left(\left|X_k^f(x)\right|\right)\right) \leq K\sum_{j=0}^{k-1}\left(\pi\left((Q^{(p)})^j\psi\right)  + \pi\left(Q^j\varphi\right)\right)
    <+\infty.
\end{align*}
Similarly, since the norm $\|\cdot\|$ is lattice, we have considering $a=\alpha$,
\[\left\|x\mapsto \mathbb E\left(\left|X_k^f(x)\right|\logalpha\left(\left|X_k^f(x)\right|\right)\right)\right\| \leq K \sum_{j=0}^{k-1}\left(\left\|(Q^{(p)})^j\psi\right\| + \left\|Q^j\varphi\right\|\right)<+\infty.\]
Thus, to conclude Step~1, we prove \eqref{eq:LemmeLlogL}.
Indeed, for $1\leq j\leq k$, we have that almost surely
\[|X_{j}^f(x)|\leq G_{j}^x(\psi) + \mathbb E(G_{j}^x(\psi))=G_{j}^x(\psi) + \delta_xQ^{j}\psi \leq G_{j}^x(\psi) + C^{j}\psi(x),\]
for some $C>0$, since $Q\psi\in L^\infty(\psi)$ by assumption.

Hence, for $a\in \{1,\alpha\}$
\begin{align*}
    \mathbb E\left(|X_{j}^f(x)| \loga\left(\left.\left|X_{j}^f(x)\right|\right)\,\right|\,\mathcal F_{j-1}\right)
    &\leq \mathbb E\left(\left.G_{j}^x(\psi)\loga(G_{j}^x(\psi))\,\right|\, \mathcal F_{j-1}\right) + \mathbb E\left(G_{j}^x(\psi)\mid \mathcal F_{j-1}\right)\loga(C^{j}\psi(x)) \\
    &+ C^{j}\psi(x) \mathbb E\left(\left.\loga(G_{j}^x(\psi)\,\right|\, \mathcal F_{j-1}\right) + C^{j}\psi(x) \loga(C^{j}\psi(x)).
\end{align*}

By Jensen's inequality and using that by the branching property $\mathbb E(G_{j}^x(\psi)\mid \mathcal F_{j-1}) = G_{j-1}^x(Q\psi) \leq CG_{j-1}^x(\psi)$ by assumption, we have,
\begin{align*}
    \mathbb E\left(\left|X_{j}^f(x)\right| \loga\left(\left|X_{j}^f(x)\right|\right)\right) 
    &\leq \mathbb E\left(G_{j}^x(\psi)\loga(G_{j}^x(\psi))\right) + 3C^{j}\psi(x) \log^*(C^{j}\psi(x))\\
    &\leq \mathbb E\left(G_{j}^x(\psi)\loga(G_{j}^x(\psi))\right) + K_1\psi(x) \loga(\psi(x)),
\end{align*}
for $K_1>0$, and~\eqref{eq:LemmeLlogL} follows.

\medskip\noindent Step~2. Let us now prove that Assumption $H(f,k)$ holds.  We have by assumption that for all $\rho >1$, 
\begin{align*}
    \sum_{n\geq 1} \theta_1^{-n} \mathbb E(G_0)Q^n g_{\rho, k, n}^{(1)}(x) \leq \sum_{n\geq 1} \left(\pi \left(g_{\rho, k, n}^{(1)}\right)+\gamma_n \|g_{\rho,k,n}^{(1)}\|\right).
\end{align*}

The first term on the right-hand side verifies,
\begin{align*}
    \sum_{n\geq 1} \pi\left(g_{\rho,k,n}^{(1)}\right)&= \int_\XX\sum_{n\geq 1}\mathbb E\left(|X_k^f(x)|\mathbf 1_{|X_k^f(x)| > \rho^n} \right) \pi(dx)\\
    &\leq \int_\XX \mathbb E\left(|X_k^f(x)| \sum_{n\geq 1} \mathbf 1_{\frac{\logone(|X_k^f(x)|)}{\log(\rho)}>n}\right)\pi(dx)\\
    &\leq \frac{1}{\log(\rho)}\int_\XX \mathbb E\left(|X_k^f(x)| \logone(|X_k|)\right)\pi(dx),
\end{align*}
which is finite by Step~1, where in the first inequality we used that $x\mapsto \logone (x)$ is non-decreasing and that $0\leq n\log (\rho) \leq \logone(\rho^n)$. For the second term we have,
\begin{align*}
    \sum_{n\geq 1}\gamma_n \|g_{\rho, k,n}^{(1)}\| &= \sum_{n\geq 1} \gamma_n \left\|\mathbb E\left(|X_k^f(\cdot)| \mathbf 1_{|X_k^f(\cdot)|> \rho^n}\right)\right\|\\
    & \leq \sum_{n\geq 1} \gamma_n \dfrac{\left\|\mathbb E\left(|X_k^f(\cdot)|\logalpha(|X_k^f(\cdot)|)\right)\right\|}{\logalpha(\rho^n)}\\
    &= \left\|\mathbb E\left(|X_k^f(\cdot)|\logalpha(|X_k^f(\cdot)|)\right)\right\|\left(\dfrac{\mathrm{e}^\alpha}{\alpha^\alpha}\sum_{\substack{n\geq 1\\ \rho^n < \mathrm{e}^\alpha}} \dfrac{\gamma_n}{\rho^n}+\dfrac{1}{\log(\rho)}\sum_{\substack{n\geq 1\\ \rho^n\geq \mathrm{e}^\alpha}}\dfrac{\gamma_n}{n^\alpha} \right),
\end{align*}
which is finite according to Step~1 and since $\alpha\geq 1$ is such that $\sum_{n} \gamma_n/n^\alpha$ is finite by assumption. In the inequality we have used Markov's inequality since $x\mapsto \logalpha(x)$ is non-decreasing.

Similarly, for the second term in Assumption~$H(f,k)$, we have by defining 
\[\rho := \left(\dfrac{\theta_1^p}{\theta_2}\right)^{\frac{1}{p-1}}>1,\]
we have
\begin{equation}\label{eq:lem3_decomposition}
    \sum_{n\geq 1} \theta_1^{-pn} \mathbb E(G_0^{(p)})(Q^{(p)})^n g_{\rho, k, n}^{(2)} \leq \sum_{n\geq 1} \left(\dfrac{\theta_2}{\theta_1^p}\right)^n\pi \left(g_{\rho, k, n}^{(2)}\right)+\sum_{n\geq 1} \left(\dfrac{\theta_2}{\theta_1^p}\right)^n\gamma_n \|g_{\rho,k,n}^{(2)}\|.
\end{equation}

For the first term on the right-hand side we have
\begin{align*}
    \sum_{n\geq 1}\left(\dfrac{\theta_2}{\theta_1^p}\right)^n \pi\left(g_{\rho, k, n}^{(2)}\right)&= \int_\XX \mathbb E\left(|X_k^f(x)|^p \sum_{n\geq 1}\left(\dfrac{\theta_2}{\theta_1^p}\right)^n\mathbf 1_{|X_k^f(x)| \leq \rho^n}\right)\pi(dx)\\
    &\leq \int_\XX \mathbb E\left(|X_k^f(x)|^p \left(\sum_{\substack{n\geq 1\\ \rho^n<\mathrm e}}\left(\frac{\theta_2}{\theta_1^p}\right)^n \mathbf 1_{|X_k^f(x)| \leq \rho^n}+\sum_{\substack{n\geq 1\\ \rho^n\geq\mathrm e}}\left(\frac{\theta_2}{\theta_1^p}\right)^n\mathbf 1_{\logone (|X_k^f(x)|\leq n \log (\rho)}\right)\right)\pi(dx)\\
    &\leq K' \int_\XX \mathbb E\left(|X_k^f(x)|^p \sum_{n\geq \frac{\logone(|X_k^f(x)|)}{\log(\rho)}} \left(\dfrac{\theta_2}{\theta_1^p}\right)^n\right)\pi(dx)\\
    &\leq\dfrac{K'\theta_1^p}{\theta_1^p-\theta_2} \int_\XX \mathbb E\left(|X_k^f(x)|^p \left(\dfrac{\theta_2}{\theta_1^p}\right)^ {\frac{\logone(|X_k^f(x)|)}{\log(\rho)}} \right) \pi(dx)\\
    &\leq\dfrac{K'\theta_1^p}{\theta_1^p-\theta_2} \int_\XX \mathbb E\left(|X_k^f(x)|^p \left(\dfrac{\theta_2}{\theta_1^p}\right)^ {\frac{\logone(|X_k^f(x)|)}{\log(\rho)}} \right) \pi(dx)
\end{align*}
for some $K'>0$.

If we now define the function 
\[\expone: x\in \mathbb R_+\mapsto \expone(x) = \begin{cases}x, &\text{ if }x\geq \mathrm e,\\ \exp(x/\mathrm e),&\text{ if }x<\mathrm e,\end{cases}\]
we have that 
\[\left(\dfrac{\theta_2}{\theta_1^p}\right)^ {\frac{\logone(|X_k^f(x)|)}{\log(\rho)}} = \expone\left(|X_k^f(x)|\right)^{\frac{\log \left(\frac{\theta_2}{\theta_1^p}\right)}{\log(\rho)}} = \expone\left(|X_k^f(x)|\right)^{1-p},\]
where we used the definition of $\rho$.

Hence,
\begin{align*}
     \sum_{n\geq 1}\left(\dfrac{\theta_2}{\theta_1^p}\right)^n \pi\left(g_{\rho, k, n}^{(2)}\right)&\leq \dfrac{K'\theta_1^p}{\theta_1^p-\theta_2} \int_\XX \mathbb E\left(|X_k^f(x)|^p \expone\left(|X_k^f(x)|\right)^{1-p}\right) \pi(dx)\\
     & = \dfrac{K'\theta_1^p}{\theta_1^p-\theta_2} \int_\XX \left[\mathrm e^p\mathbb E\left( \left(\dfrac{|X_k^f(x)|}{\mathrm e}\right)^p \exp\left(\dfrac{|X_k^f(x)|}{\mathrm e}\right)^{1-p}\mathbf 1_{|X_k^f(x)|<\mathrm e}\right) + \mathbb E\left(|X_k^f(x)| \mathbf 1_{|X_k^f(x)|\geq \mathrm e}\right)\right]\pi(dx)\\
     &\leq \dfrac{K'\theta_1^p}{\theta_1^p-\theta_2} \int_\XX \left[\mathrm e^p\mathbb E\left(\dfrac{|X_k^f(x)|}{\mathrm e} \mathrm e^{1-p}\mathbf 1_{|X_k^f(x)|<\mathrm e}\right) + \mathbb E\left(|X_k^f(x)| \mathbf 1_{|X_k^f(x)|\geq \mathrm e}\right)\right]\pi(dx)\\
     &=\dfrac{K'\theta_1^p}{\theta_1^p-\theta_2} \int_\XX \mathbb E\left(|X_k^f(x)|\right)\pi(dx)\\
     &\leq \dfrac{2K'\theta_1^p}{\theta_1^p-\theta_2} \int_\XX \mathbb E\left(G_k^x(\psi)\right)\pi(dx)\\
     &= \dfrac{2K'\theta_1^p}{\theta_1^p-\theta_2} \pi\left(Q^k\psi\right)\\
     & \leq \dfrac{2K'\theta_1^pC^k}{\theta_1^p-\theta_2}\pi(\psi)\\
     &<+\infty
\end{align*}
for some constant $C>0$, since $Q\psi \in L^\infty(\psi)$. 

Finally, for the second term on the right-hand side of \eqref{eq:lem3_decomposition}, we have that using the definition of $\rho$,

\begin{align*}
    \sum_{n\geq 1}\left(\dfrac{\theta_2}{\theta_1^p}\right)^n \gamma_n \left\|\mathbb E\left( |X_k^f(\cdot)|^p \mathbf 1_{|X_k^f(\cdot)|\leq \left(\frac{\theta_1^p}{\theta_2}\right)^{\frac{n}{p-1}}}\right)\right\|
    &\leq \sum_{n\geq 1}\left(\dfrac{\theta_2}{\theta_1^p}\right)^n \gamma_n \left\|\mathbb E\left( |X_k^f(\cdot)|^p \mathbf 1_{\exp \left(\frac{\alpha}{p-1}\right)\leq |X_k^f(\cdot)|\leq \left(\frac{\theta_1^p}{\theta_2}\right)^{\frac{n}{p-1}}}\right)\right\|\\
    &\qquad + \sum_{n\geq 1}\left(\dfrac{\theta_2}{\theta_1^p}\right)^n \gamma_n \left\|\mathbb E\left( |X_k^f(\cdot)|^p\mathbf 1_{|X_k^f(\cdot)|\leq \exp\left(\frac{\alpha}{p-1}\right)}\right)\right\|\\
    &\leq \sum_{n\geq 1}\left(\dfrac{\theta_2}{\theta_1^p}\right)^n \gamma_n \left\|\mathbb E\left( |X_k^f(\cdot)|^p \mathbf 1_{\exp\left(\frac{\alpha}{p-1}\right)\leq |X_k^f(\cdot)|\leq \left(\frac{\theta_1^p}{\theta_2}\right)^{\frac{n}{p-1}}}\right)\right\|\\
    &\qquad + \exp(\alpha)\left\|\mathbb E\left( |X_k^f(\cdot)| \right)\right\|\,\sum_{n\geq 1}\left(\dfrac{\theta_2}{\theta_1^p}\right)^n \gamma_n 
\end{align*}
By assumption, $\left\|\mathbb E\left( |X_k^f(\cdot)| \right)\right\|<\infty$ and $\sum_{n\geq 1}\left(\dfrac{\theta_2}{\theta_1^p}\right)^n \gamma_n<\infty$. 
In addition, using that $x\mapsto \frac{x^{p-1}}{\logalpha (x)}$ is increasing on $\left[\exp\left(\frac{\alpha}{p-1}\right),+\infty\right)$,
\begin{align*}
    \sum_{n\geq 1}\left(\dfrac{\theta_2}{\theta_1^p}\right)^n \gamma_n \left\|\mathbb E\left( |X_k^f(\cdot)|^p \mathbf 1_{\exp\left(\frac{\alpha}{p-1}\right)\leq |X_k^f(\cdot)|\leq \left(\frac{\theta_1^p}{\theta_2}\right)^{\frac{n}{p-1}}}\right)\right\|
    &\leq \sum_{n\geq 1} \gamma_n \left(\dfrac{\theta_2}{\theta_1^p}\right)^n\left\|\mathbb E\left( |X_k^f(\cdot)|\logalpha( |X_k^f(\cdot)|)\,\frac{\left(\frac{\theta_1^p}{\theta_2}\right)^{n}}{\left[\log\left(\left(\frac{\theta_1^p}{\theta_2}\right)^{\frac{n}{p-1}}\right)\right]^\alpha}\right)\right\|\\
    &=\frac{(p-1)^\alpha\left\|\mathbb E\left(|X_k^f(\cdot)|\logalpha(|X_k^f(\cdot)|)\right)\right\|}{\log\left(\frac{\theta_1^p}{\theta_2}\right)}\sum_{n\geq 1} \frac{\gamma_n}{n^\alpha}\\
    &<+\infty
\end{align*}
by assumption. 

This concludes the proof.
\end{proof}

\begin{proof}[Proof of Theorem~\ref{thm:unifBiggins}]
     We write
    \begin{align*}
        W^f_{n+1}-W^f_n=W^f_{n+1}-\mathbb E(W^f_{n+1}\mid\cF_n)=\theta_1^{-n-1}\sum_{e\in\mathbb N^n}^\infty w_i X_{e,1}^{\eta_f}=A_{n+1}+B_{n+1},
    \end{align*}
    where the $X_{e,1}^{\eta_f}$, $e\in\mathbb N^n$, are independent random variables, each with the same distribution  as $X_1^{\eta_f}(X_e)$, and where
    \begin{align}
        \label{equation9}
        A_{n+1}&=\theta_1^{-n-1}\sum_{e\in\mathbb N^n} w_e X_{e,1}^{\eta_f}\1_{|X_{e,1}^{\eta_f}|\leq \rho^n}-\mathbb E(\theta_1^{-n-1}\sum_{e\in\mathbb N^n} w_e X_{e,1}^{\eta_f}\1_{|X_{e,1}^{\eta_f}|\leq \rho^n}\mid \mathcal F_n),\\ 
        B_{n+1}&=\theta_1^{-n-1}\sum_{e\in\mathbb N^n} w_e X_{e,1}^{\eta_f}\1_{|X_{e,1}^{\eta_f}|> \rho ^n}-\mathbb E(\theta_1^{-n-1}\sum_{e\in\mathbb N^n} w_e X_{e,1}^{\eta_f}\1_{|X_{e,1}^{\eta_f}|> \rho^n}\mid \mathcal F_n).
    \end{align}    
 We observe that, for some constant $c_p\geq 1$ (using \cite{Chatterji1969}),
  \begin{align*}
     \mathbb E(|A_{n+1}|^p)&= \theta_1^{-pn-p}\mathbb E(|\sum_{e\in\mathbb N^n} w_e X_1^{\eta_f}(X_e)\1_{|X_1^{\eta_f}(X_e)|\leq \rho^n}-\mathbb E(X_1^{\eta_f}(X_e)\1_{|X_1^{\eta_f}(X_e)|\leq \rho^n}\mid \mathcal F_n)|^p)\\
     &\leq c_p  \theta_1^{-pn-p}\mathbb E( \sum_{e\in\mathbb N^n}  w_e^p \mathbb E(|X_1^{\eta_f}(X_e)|^p\1_{|X_1^{\eta_f}(X_e)|\leq \rho^n}\mid\mathcal F_n))\\
     &= c_p  \theta_1^{-pn-p} \mathbb E( \sum_{e\in\mathbb N^n} w_e^p g^{(2)}_{\rho,k,n}(X_e))\\
     &=c_p \theta_1^{-pn-p} \mathbb E(G_0^{(p)})(Q^{(p)})^n(g^{(2)}_{\rho,k,n}).
 \end{align*}
 We deduce, using Assumption~H($\eta_f,1$), that $\sum_{n\geq 0} \mathbb E(|A_n|^p)<\infty$, so that the martingale defined by $\sum_{k=0}^n A_k$ is bounded in $L^p$ and is thus uniformly integrable.
    
    Similarly, we obtain, using again Assumption~H($\eta_f,1$), that
    \begin{align*}
        \sum_{n\geq 0}\mathbb E\left|B_{n+1}\right|
        &\leq 2 \sum_{n\geq 0} \theta_1^{-p{n+1}}\mathbb E\left(\sum_{e\in\mathbb N^n} w_e g^1_{n,1}(x)\right)
        = \sum_{n\geq 0} \mathbb E(G_0)Q^ng^1_{n,1} <\infty.
    \end{align*}
    This implies that  the martingale defined by $\sum_{k= 0}^n B_k$ is uniformly integrable.
    
    We conclude that the martingale defined by $W^f_n-W^f_0$ is uniformly integrable, and, since $W^f_0=G_0(\eta_f)$ is integrable by assumption, that $W^f$ is uniformly integrable.
\end{proof}

\begin{proof}[Proof of Theorem~\ref{thm:asLlogL}]
  We assume without loss of generality that $G_0$ is deterministic. For any $k\geq 0$, let 
  \begin{align*}
      &S_n=\theta_1^{-n-k}\sum_{e\in\mathbb N^{n+k}}w_e f(X_e),\\
      &S'_n=S_n-\mathbb E(S_n\mid\mathcal F_n)=\theta_1^{-n}\sum_{e\in\mathbb N^{n}}w_e X_{e,k}^{f},\\
      &\tilde S_n=\theta_1^{-n}\sum_{e\in\mathbb N^{n}}w_e X_{e,k}^{f}\mathbf 1_{|X_{e,k}^f|\leq \rho^n}
  \end{align*}
   where the $X_{e,k}^{f}$, $e\in\mathbb N^n$, are independent random variables, each with the same distribution  as $X_k^{f}(X_e)$.
 
 \medskip\noindent\textit{Step 1.} We first prove that $S'_n\to0$ almost surely when $n\to+\infty$.
 
  We have
  \begin{align*}
       \frac1\varepsilon\sum_{n=1}^\infty \mathbb E(|S'_n-\tilde S_n|)
      &\leq \frac1\varepsilon\sum_{n=1}^\infty \theta_1^{-n} \mathbb E\left(\sum_{e\in\mathbb N^{n}}w_e X_{e,k}^{f}\mathbf 1_{|X_k^f|> \rho^n}\right)\\
      &= \frac1\varepsilon\sum_{n=1}^\infty \theta_1^{-n} \mathbb E(G_0)Q^ng_{\rho,n,k}^{(1)}<+\infty,
  \end{align*}
  by assumption. We deduce that $S'_n-\tilde S_n$ converges almost surely to $0$ when $n\to+\infty$.
  
  In addition, there exists $c_p>0$ such that
  \begin{align*}
      \sum_{n=1}^\infty   \mathbb E(|\tilde S_n-\mathbb E(\tilde S_n\mid \mathcal F_n)|^p)
      &\leq \sum_{n=1}^\infty c_p \theta_1^{-pn} \mathbb E\left(\sum_{e\in\mathbb N^n} w_e^p|X_{e,k}^f \mathbf 1_{|X_{e,k}^f|\leq \rho^n}-\mathbb E(X_{e,k}^f\mathbf 1_{|X_{e,k}^f|\leq \rho^n}\mid \mathcal F_n)|^p\right)\\
      &\leq 2^{p-1} c_p \sum_{n=1}^\infty \theta_1^{-pn} \mathbb E\left(\sum_{e\in\mathbb N^n} w_e^p|X_{e,k}^f|^p \mathbf 1_{|X_{e,k}^f|\leq \rho^n}\right)\\
      &=2^{p-1} c_p \sum_{n=1}^\infty \theta_1^{-pn} \mathbb E(G_0)(Q^{(p)})^ng_{\rho,k,n}^{(2)}<+\infty
  \end{align*}
  by assumption. We deduce that $\tilde S_n-\mathbb E(\tilde S_n\mid \mathcal F_n)$ converges to $0$ when $n\to+\infty$.
  
  We also observe that
  \begin{align*}
      \sum_{n=1}^\infty \mathbb E\left|\mathbb E(\tilde S_n\mid \mathcal F_n)\right|
      &= \sum_{n=1}^\infty \theta_1^{-n} \mathbb E\left|\mathbb E\left(\sum_{e\in\mathbb N^{n}}w_e X_{e,k}^{f}\mathbf 1_{|X_{e,k}^f|\leq \rho^n}\mid \mathcal F_n\right)\right|\\
      &=\sum_{n=1}^\infty \theta_1^{-n} \mathbb E\left|\mathbb E\left(\sum_{e\in\mathbb N^{n}}w_e X_{e,k}^{f}\mathbf 1_{|X_{e,k}^f|> \rho^n}\mid \mathcal F_n\right)\right|\\
      &\leq \sum_{n=1}^\infty \theta_1^{-n} \mathbb E\left(\sum_{e\in\mathbb N^{n}}w_e |X_{e,k}^{f}|\mathbf 1_{|X_{e,k}^f|> \rho^n}\right)\\
      & = \sum_{n=1}^\infty \theta_1^{-n} \mathbb E(G_0)Q^ng_{\rho,n,k}^{(1)}<\infty,
  \end{align*}
  by assumption. We deduce that $\mathbb E(\tilde S_n\mid \mathcal F_n)$ converges to $0$ almost surely when $n\to+\infty$.
  
  Using the almost sure convergence to $0$ of $S'_n-\tilde S_n$, of $\tilde S_n-\mathbb E(\tilde S_n\mid \mathcal F_n)$, and of $\mathbb E(\tilde S_n\mid \mathcal F_n)$, we deduce that $S'_n$ converges to $0$ almost surely when $n\to+\infty$.

  \medskip\noindent \textit{Step 2.} We now prove the convergence of $u^f_n:=\theta_1^{-n}G_n(f)$. We have, for all $n\geq 1$,
  \begin{align*}
      \mathbb E(\theta_1^{-k}S_n\mid\mathcal F_n)&=\mathbb \theta_1^{-n-k}\sum_{e\in\mathbb N^n}w_e E\left(G^{X_e}_k(f)\right)\\
      &=\mathbb \theta_1^{-n-k}\sum_{e\in\mathbb N^n}w_e E\left(G^{X_e}_{e,k}(f)\mid X_e\right)=\mathbb \theta_1^{-n-k}\sum_{e\in\mathbb N^n}w_e Q^kf(X_e),
      \end{align*}
      where the $G^{X_e}_{e,k}$ are the weighted empirical distributions of the $k$-the generation of independents weighted branching processes with initial position $\delta_{X_e}$.
       For all $k\geq 1$, we have, setting $\varepsilon^f_n:=S_n-\mathbb E(S_n\mid\mathcal F_n)$,
      \begin{align*}
      	\left|u^f_{n+k}-W^f_{n}\right|
      &=\varepsilon^f_n+\left|\theta_1^{-k}\mathbb E(S_n\mid\mathcal F_n)-\theta_1^{-n}\sum_{e\in\mathbb N^n}w_e \eta_{f}(X_e)\right|\\
      &= \varepsilon^f_n+\left|\theta_1^{-n}\sum_{e\in\mathbb N^n}w_e \theta_1^{-k}Q^k(f)(X_e)-\theta_1^{-n}\sum_{e\in\mathbb N^n}w_e \eta_{f}(X_e)\right|\\
      &\leq\varepsilon^f_n+\theta_1^{-n}\sum_{e\in\mathbb N^n}w_e \alpha_k \psi(X_e)\\
      &\leq \varepsilon^f_n+\theta_1^{-n}\sum_{e\in\mathbb N^n}w_e \alpha_k f(X_e)=\varepsilon^f_n+\alpha_k u^f_n,
  \end{align*}
  by assumption EB($f,\psi$).  Since $W_n^f$ converges to $W_\infty^f$ almost surely, since $\varepsilon^f_n$ converges to $0$ almost surely, and since $\alpha_k$ can be chosen arbitrarily small, we deduce that $u^f_n$ converges almost surely to~$W^f_\infty$.

    \medskip\noindent \textit{Step 3.} We conclude the proof of the theorem. Consider a  function $g:\XX\to \mathbb R$ such that EB($g,\psi$) and H($g,k$) hold true for all $k\geq 1$. Then, replacing $f$ by $g$ in steps~1 and~2, we deduce that
    \begin{align*}
    		\left|u^g_{n+k}-W^g_{n}\right|\leq \varepsilon^g_n+\alpha_k\theta_1^{-n}\sum_{e\in\mathbb N^n}w_e  f(X_e)=\varepsilon^g_n+\alpha_k\theta_1^{-n}G_n(f).
    \end{align*}
    Since $\varepsilon^g_n$ goes to $0$ when $n\to+\infty$, since $\alpha_k$ can be chosen arbitrarily small, and since $\theta_1^{-n}G_n(f)$ converges to $W^f_\infty$ (according to the previous step), we deduce that $u^g_n$ converges to~$W^g_\infty$.
\end{proof}

\section{Examples}
    \label{sec:examples}
    
    \subsection{Products of random weights indexed by
        Galton-Watson trees}

    \label{sec:exaGW}
    
    The Mandelbrot cascade referred to in the introduction corresponds to the particular case $\XX=\{1\}$ with initial configuration $G_0=\delta_1$.
       We assume without loss of generality that $\E\left(\sum_{i}u_{i}\right)=1$, so that $(G_n(1))_{n\in\mathbb N}$ is a non-negative martingale and thus it converges almost surely to some non-negative random variable $G_\infty(1)$. The following proposition is an immediate application of Theorem~\ref{thmMain}.
    
\begin{proposition}
Fix $p\in(1,2]$. If
    \begin{align}
        \label{eqNLiu2}
        \E\left[\left(\sum_{i\in\mathbb N}u_{i}\right)^p\right]<+\infty\text{ and
        }\E\left[\sum_{i\in \mathbb N} u_{i}^p\right]<1,
    \end{align}
    where $(u_i)_{i\in\mathbb N}$ is distributed according to $K(1,\cdot)$, then
        \begin{align}
        \sqrt[p]{ \E\left(\left|G_{n}(1)-G_\infty(1)\right|^p\right)}
            \leq&\quad C \,\left[\E\left(\sum_{i\in\mathbb N} u_i^p\right)\right]^{n/p},
    \end{align}
    for some constant $C>0$.
\end{proposition}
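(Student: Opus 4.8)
The plan is to verify that Assumption~MD($f,p$) holds for the constant function $f\equiv\1$ and then read off the conclusion from Theorem~\ref{thmMain}. Since $\XX=\{1\}$, every finite measure on $\XX$ is a multiple of $\delta_1$, the kernel $Q$ acts as multiplication by $\E\bigl(\sum_i u_i\bigr)=1$ (so we take $\theta_1=1$ and $\beta=0$), and $Q^{(p)}$ acts as multiplication by $m_p:=\E\bigl(\sum_i u_i^p\bigr)$, with $m_p<1$ by~\eqref{eqNLiu2}. Observe also that, with the choice $\eta_f\equiv\1$ made below, $W^f_n=\theta_1^{-n}G_n(\eta_f)$ coincides with the non-negative martingale $G_n(1)$ of the statement, so that $W^f_\infty=G_\infty(1)$.

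For the first part of Assumption~MD($f,p$), choose $\psi_1\equiv\1$: then $\theta_1^{-n}\delta_1Q^nf=1$ for all $n$, so~\eqref{eq:eqMD1} and~\eqref{eq:eqMD2} hold with $\eta_f\equiv\1$, $\alpha_n\equiv0$ and $c_1=1$. For the second part, choose $\psi_2\equiv\1$: then $\theta_1^{-pn}\delta_1(Q^{(p)})^n\psi_2^p=m_p^{\,n}$, so~\eqref{eq:eqMD3bis1} holds with $c_2=1$ and $\gamma_n:=m_p^{\,n/p}$ (summable since $m_p<1$, with $\Gamma_m=m_p^{\,m/p}/(1-m_p^{1/p})$). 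Finally, for $Y_i\equiv1$ and any $g:\XX\to\R$ we have $\sum_i u_ig(Y_i)-\delta_1Qg=g(1)\bigl(\sum_i u_i-1\bigr)$, hence $\E\bigl(|\sum_i u_ig(Y_i)-\delta_1Qg|^p\bigr)=|g(1)|^p\,\E\bigl(|\sum_i u_i-1|^p\bigr)$; since $\|g\|_{\psi_1}=|g(1)|$ and, by~\eqref{eqNLiu2} together with $\E(\sum_i u_i)=1$, $c_3:=\E\bigl(|\sum_i u_i-1|^p\bigr)\le 2^{p-1}\bigl(\E[(\sum_i u_i)^p]+1\bigr)<\infty$, inequality~\eqref{eq:eqMD3bis} holds as well.

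It then suffices to substitute these data into~\eqref{eq:eqthmMain}. Here $G_0^{(p)}(\psi_2^p)=1$, $\|f\|_{\psi_1}=\|\eta_f\|_{\psi_1}=1$, and since $\beta=0$ and $\alpha_n\equiv0$ the whole second line of~\eqref{eq:eqthmMain} vanishes (its prefactor contains $1-n^\beta/(n+m)^\beta=0$). Thus $\sqrt[p]{\E(|G_{m+n}(1)-G_\infty(1)|^p)}\le 2c_0\,\Gamma_m$ for all admissible $m,n$, with $c_0=\sqrt[p]{2c_3}$. Taking $m$ as large and $n$ as small as permitted (e.g.\ $m=N-1$, $n=1$ for $N\ge1$) yields $\sqrt[p]{\E(|G_N(1)-G_\infty(1)|^p)}\le C\,m_p^{\,N/p}$ with $C=2c_0/\bigl((1-m_p^{1/p})m_p^{1/p}\bigr)$; recalling $m_p=\E(\sum_i u_i^p)$, this is exactly the claimed estimate.

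I expect no real obstacle here: the proposition is, as announced, a direct specialization of Theorem~\ref{thmMain}. The only two points deserving a line of justification are the finiteness of $c_3=\E(|\sum_i u_i-1|^p)$, which is the $L^p$ triangle inequality combined with $\E(\sum_i u_i)=1$, and the bookkeeping of constants so that the final bound exhibits the geometric rate $m_p^{\,N/p}=\E(\sum_i u_i^p)^{N/p}$.
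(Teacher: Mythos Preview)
Your proposal is correct and follows exactly the route the paper intends: the proposition is stated there as ``an immediate application of Theorem~\ref{thmMain}'' with no further proof, and your verification of Assumption~MD($\1,p$) with $\psi_1=\psi_2\equiv\1$, $\theta_1=1$, $\beta=0$, $\alpha_n\equiv0$, $\gamma_n=m_p^{n/p}$ is precisely what that application amounts to. A minor remark: since $\beta=0$ and $\alpha_n\equiv0$, the second line of~\eqref{eq:eqthmMain} vanishes for every $n$, so you may as well take $m=N$, $n=0$ and obtain the cleaner constant $C=2c_0/(1-m_p^{1/p})$, but your choice $m=N-1$, $n=1$ is of course also valid.
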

    
    In~\cite[Theorem~1]{liu1996products}, assuming that the number $N$ of non-zero weights is finite almost surely and that
    \begin{align}
        \label{eqNLiu1}
       \E\left[\left(\sum_{i\in\mathbb N}u_{i}\right)\log_+\left(\sum_{i\in\mathbb N}u_{i}\right)\right]<+\infty\text{ and }\E\left[\sum_{i\in\mathbb N} u_{i}\log u_{i}\right]<0,
    \end{align}
    the author shows that~\eqref{eqNLiu2} is actually equivalent to  $\E((G_\infty(1)^p)<+\infty$.  Our result thus precise this equivalence by showing that, under these assumptions, $\E((G_\infty(1)^p)<+\infty$ is actually equivalent to the exponential convergence of $G_n(1)$ to $G_\infty(1)$.
    
Applying Theorem~\ref{thm:unifBiggins}, we deduce the following result.
\begin{proposition}
If there exists $p\in(1,2]$ such that
    \begin{align}
        \label{eqNLiu3}
        \E\left[\left(\sum_{i\in\mathbb N}u_{i}\right)\log_+\left(\sum_{i\in\mathbb N}u_{i}\right)\right]<+\infty\text{ and
        }\E\left[\sum_{i\in \mathbb N} u_{i}^p\right]<1,
    \end{align}
    then the martingale $(G_n(1))_{n\in\mathbb N}$ is uniformly integrable.
\end{proposition}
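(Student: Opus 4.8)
The plan is to obtain the result as a direct specialization of Theorem~\ref{thm:unifBiggins}, applied with $f=\psi\equiv 1$. In the present one-type setting $\XX=\{1\}$, every non-negative kernel from $\{1\}$ to $\{1\}$ acts on functions by multiplication by a non-negative scalar; writing $W:=\sum_{i\in\mathbb N}u_i$ and $m_p:=\mathbb E\bigl(\sum_{i\in\mathbb N}u_i^p\bigr)$, the mean kernel $Q$ is multiplication by $\mathbb E(W)=1$ and $Q^{(p)}$ is multiplication by $m_p$. Hence $Q^nf\equiv 1$ for all $n$, so Assumption~EB($1,1$) holds with $\theta_1=1$, $\alpha_n\equiv 0$ and $\eta_f\equiv 1$; moreover $W^f_n=\theta_1^{-n}G_n(\eta_f)=G_n(1)$ is exactly the martingale in the statement, and $\mathbb E(G_0(\eta_f))=1<\infty$. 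It therefore only remains to verify Assumption~H($\eta_f,1$), i.e.\ H($1,1$).

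To this end I would choose $\rho:=m_p^{-1/(p-1)}$, which is $>1$ because $m_p<1$. Since $X_1^{\eta_f}(1)=G_1^1(1)-\mathbb E(G_1^1(1))=W-1$ and $\mathbb E(G_0)=\mathbb E(G_0^{(p)})=\delta_1$ with $Q$, $Q^{(p)}$ acting as above, the two series in Assumption~H($1,1$) reduce to
\[
\sum_{n\ge 1}\mathbb E\bigl(|W-1|\,\mathbf 1_{|W-1|>\rho^n}\bigr)
\qquad\text{and}\qquad
\sum_{n\ge 1}m_p^{\,n}\,\mathbb E\bigl(|W-1|^p\,\mathbf 1_{|W-1|\le\rho^n}\bigr).
\]
For the first series, exchanging the sum and the expectation and using $\sum_{n\ge 1}\mathbf 1_{|W-1|>\rho^n}\le(\log\rho)^{-1}\log_+|W-1|$ bounds it by $(\log\rho)^{-1}\mathbb E\bigl(|W-1|\log_+|W-1|\bigr)$; this is finite because $|W-1|\log_+|W-1|\le C\,(1+W\log_+W)$ for a universal constant $C$, and $\mathbb E(W\log_+W)<\infty$, $\mathbb E(W)=1$ by hypothesis.

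For the second series I would split according to $|W-1|\le 1$ and $|W-1|>1$. On $\{|W-1|\le 1\}$ the contribution is at most $\mathbb E\bigl(|W-1|^p\mathbf 1_{|W-1|\le 1}\bigr)\sum_{n\ge 1}m_p^{\,n}<\infty$, using $m_p<1$. On $\{|W-1|>1\}$ the indicator forces $n\ge\log_\rho|W-1|$, so $\sum_{n:\,\rho^n\ge|W-1|}m_p^{\,n}\le(1-m_p)^{-1}m_p^{\log_\rho|W-1|}=(1-m_p)^{-1}|W-1|^{1-p}$, the last identity being precisely what the choice of $\rho$ ensures; hence this part is at most $(1-m_p)^{-1}\mathbb E\bigl(|W-1|\mathbf 1_{|W-1|>1}\bigr)\le(1-m_p)^{-1}\bigl(\mathbb E(W)+1\bigr)<\infty$. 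Thus Assumption~H($1,1$) holds, and Theorem~\ref{thm:unifBiggins} yields the uniform integrability of $(G_n(1))_{n\in\mathbb N}$. The only mildly delicate point is the bookkeeping that matches the $L\log L$ hypothesis to the summability of the first series, together with the choice of $\rho$ that makes the geometric weights $m_p^{\,n}$ in the second series exactly cancel the polynomial growth of the truncation level; beyond this I do not expect any real obstacle, the statement being an essentially mechanical specialization of Theorem~\ref{thm:unifBiggins}.
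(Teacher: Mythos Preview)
Your proposal is correct and follows exactly the route the paper intends: the proposition is stated as an immediate application of Theorem~\ref{thm:unifBiggins}, and you carry out precisely that specialization to $\XX=\{1\}$, $f=\psi=\eta_f\equiv 1$, $\theta_1=1$. Your direct verification of Assumption~H($1,1$)—choosing $\rho=m_p^{-1/(p-1)}$ so that the geometric weight $m_p^{\,n}$ exactly offsets the truncation level $\rho^{n(p-1)}$—is the natural computation and parallels the argument underlying Lemma~\ref{lem:lemma3} in this trivial one-point setting; the paper does not spell these details out, so your write-up in fact supplies what the paper leaves implicit.
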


In \cite[Theorème~1.1]{Liu1997}, assuming that the number $N$ of non-zero weights is finite almost surely, the author shows that the conclusion is equivalent to the weaker condition~\eqref{eqNLiu1}. This shows that our $L\log L$ results can be optimized at least under the restrictions considered in~\cite{Liu1997}. Since the point of view of~\cite{Liu1997} is quite different, it may be possible to combine both strategies to obtain sharper $L\log L$ results. This is the subject of an ongoing work.%\denis{On tente une conjecture ?}

\subsection{Products of random kernels indexed by Galton-Watson trees}

\label{sec:exakernels}

We consider a generalization of the previous section's model by considering the situation where to each individual $e$ in a Galton-Watson tree is attached a random kernel $A_e$, and we are interested in the asymptotic behaviour of the sum over the individuals of generation $n$ of the products of the kernels along their ancestry line, that is
$\sum_{e\in\mathbb N^n} \prod_{0<e'\leq e} A_{e'}$, where $e'\leq e$ means that $e'$ is an ancestor of~$e$.

More formally, we consider the situation where $\XX$ is a measurable set of bounded kernels on a measurable space $E$, where $G_0=\delta_I$, with $I:E\to E$ is the identity kernel, and where there exists a probability measure $\mu_\XX$ on $\XX^\mathbb N$ such that, for all $A\in \XX$,
\[
K(A,\cdot)\text{ is the law of }(1,AA_i)\text{ where }(A_i)_{i\in\mathbb N}\sim \mu_\XX.
\]
Note that we take all the weights equal to $1$ in this case, since the weights can be encoded in the random kernels. 
Denoting by $A_e\in \XX$ the random kernel attached to the individual $e\in \mathbb N^{\mathbb N}$, we have 
$G_n(F_{x,f})=\sum_{e\in\mathbb N} \prod_{0<e'\leq e} \delta_x A_{e'} f$.

We assume that, for all $x\in E$,
\begin{align*}
    \mathbb E\left(\sum_{i\in\mathbb N} \delta_x|A_i|(E) \right)<\infty,
\end{align*}
where $(A_i)_{i\in\mathbb N}$ is distributed according to $\mu_\XX$ and $|A_i|=(A_i)_++(A_i)_-$, with  $(A_i)_+$ (resp. $(A_i)_-$) the positive (resp. negative) part of the kernel $A_i$. We define the (determinisitic) kernel $P$ on $E$ by
\begin{align*}
    \delta_x Pf:=\mathbb E_x\left(\sum_{i\in \mathbb N} \delta_x A_i f\right),
\end{align*}
for all $x\in E$ and bounded measurable $f:E\to\mathbb R_+$.

In the following result, we use the kernel norm $||| A|||=\sup_{x\in E}\delta_x|A|(E)$
\begin{proposition}
Fix $p\in(1,2]$. Assume that
    there exists $\theta_1>0$, an integer $\beta\geq 0$, a bounded function $\eta:E\to\mathbb R$ and a totally bounded measure $\nu$ on $E$ such that
    \begin{align}
        \label{eq:Pninprop}
        \left|n^{-\beta}\theta_1^{-n}\delta_x P^n f-\eta(x)\nu(f)\right|\leq \alpha_n\|f\|_\infty,\quad\forall x\in E \text{ and  bounded measurable }f:E\to\mathbb R,
    \end{align}
    where $\alpha_n$ goes to $0$ when $n\to+\infty$. We also assume that
    \begin{align}
        \label{eqNLiu2bis}
        \E\left[\left(\sum_{i\in\mathbb N}|||A_i|||\right)^p\right]<+\infty\text{ and
        }\bar\gamma:=\E\left[\sum_{i\in \mathbb N} |||A_i|||^p\right]<\theta_1^p,
    \end{align}
    where $(A_i)_{i\in\mathbb N}$ is distributed according to $\mu_\XX$.
    Then, for all bounded measurable function $f:E\to\mathbb R_+$ and all $x\in E$,
        \begin{multline*}
        \sqrt[p]{ \E\left(\left|n^{-\beta}\theta_1^{-n}\mathbb E\left(\sum_{e\in\mathbb N^{n+m}} \delta_x \prod_{e'\leq e}A_{e'}\,f\right)-W^\eta_\infty\nu(f)\right|^p\right)}\\
        \leq C
\|f\|_{\infty}\,\left(\left(\frac{\bar \gamma}{\theta_1^1}\right)^{m/p}m^{(1/2-1/p)_+}  +\alpha_n \dfrac{n^\beta m^\beta}{(n+m)^\beta} +  \left(1-\dfrac{n^\beta}{(n+m)^\beta}\right)\right),
    \end{multline*}
    for some constant $C>0$.
\end{proposition}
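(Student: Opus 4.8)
The plan is to recognize the model as a weighted branching process on the kernel type space $\XX$, with all weights equal to $1$, and to apply Theorem~\ref{thmMain} to the linear test functions $F_{x,f}:\XX\to\mathbb R$ given, for a fixed $x\in E$ and bounded measurable $f:E\to\mathbb R$, by $F_{x,f}(A):=\delta_x A f$. Since $K(A,\cdot)$ is the law of $(1,AA_i)_{i\in\mathbb N}$ with $(A_i)_i\sim\mu_\XX$, a Fubini argument (licit by the standing integrability assumption on the $A_i$) gives $Q F_{x,f}=F_{x,Pf}$, hence $\delta_A Q^n F_{x,f}=\delta_x A P^n f$ and $G_n(F_{x,f})=\sum_{e\in\mathbb N^n}\delta_x\prod_{0<e'\le e}A_{e'}f$; moreover $Q^{(p)}=Q$ because every weight equals $1$. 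Thus the left-hand side to be estimated is $\sqrt[p]{\E(|(n+m)^{-\beta}\theta_1^{-(n+m)}G_{n+m}(F_{x,f})-\nu(f)W^\eta_\infty|^p)}$, which is of the form handled by Theorem~\ref{thmMain}; by homogeneity in $f$ we may assume $\|f\|_\infty\le1$, so it remains to verify Assumption~MD($F_{x,f},p$) and to simplify the resulting bound.

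I would take $\psi_1(A)=\psi_2(A):=\delta_x|A|(E)$, keep the $\theta_1,\beta$ of~\eqref{eq:Pninprop}, and set $\eta_{F_{x,f}}:=\nu(f)F_{x,\eta}$, for which $\|F_{x,f}\|_{\psi_1}\le\|f\|_\infty$ and $\|\eta_{F_{x,f}}\|_{\psi_1}\le\nu(1)\|\eta\|_\infty<\infty$ ($\nu$ being totally bounded and $\eta$ bounded). Then~\eqref{eq:eqMD1} follows by integrating~\eqref{eq:Pninprop} against the signed measure $\delta_x A$, with $\delta_x|A|(E)$ coming out as the required multiple of $\psi_1(A)$. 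For~\eqref{eq:eqMD3bis}, given $g$ with $\|g\|_{\psi_1}<\infty$ one bounds $\sum_i|g(AA_i)|\le\|g\|_{\psi_1}\int|\delta_x A|(\mathrm dy)\sum_i\delta_y|A_i|(E)$ and combines Jensen's inequality, the uniform moment bound $\sup_y\mathbb E[(\sum_i\delta_y|A_i|(E))^p]\le\mathbb E[(\sum_i|||A_i|||)^p]<\infty$, and $\mathbb E|Z-\mathbb E Z|^p\le2^p\mathbb E|Z|^p$, obtaining $c_3=2^p\mathbb E[(\sum_i|||A_i|||)^p]$. For~\eqref{eq:eqMD3bis1}, sub-multiplicativity of the total variation ($\delta_y|\prod_{e'\le e}A_{e'}|(E)\le\prod_{e'\le e}|||A_{e'}|||$) gives $\delta_A Q^n\psi_2^p\le\psi_2(A)^p\bar\gamma^n$, so one takes $c_2=1$ and $\gamma_n:=(\bar\gamma/\theta_1^p)^{n/p}n^{(1/2-1/p)_+}$, which is summable exactly because $\bar\gamma<\theta_1^p$. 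Lastly~\eqref{eq:eqMD2} reduces, by the same sub-multiplicativity, to bounding $\mathbb E(\sum_{e\in\mathbb N^n}\delta_y|\prod_{0<e'\le e}A_{e'}|(E))$ by $\delta_y P^n 1\le\|P^n 1\|_\infty\le Cn^\beta\theta_1^n$, the last two steps being~\eqref{eq:Pninprop} with $f\equiv1$; this suffices since the types reached from $G_0=\delta_I$ are products of the $A_i$.

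Once Assumption~MD($F_{x,f},p$) is secured, Theorem~\ref{thmMain} yields~\eqref{eq:eqthmMain} with $F_{x,f}$ in place of $f$, and it remains to simplify. Here $G_0=\delta_I$ and $\delta_x I=\delta_x$ give $\mathbb E(G_0^{(p)}(\psi_2^p))=\psi_2(I)^p=1$ and $\mathbb E(|G_0\psi_1|^p)=\psi_1(I)^p=1$; $\Gamma_0$ is a finite constant and $\Gamma_m=\sum_{k\ge m}\gamma_k\le C'(\bar\gamma/\theta_1^p)^{m/p}m^{(1/2-1/p)_+}$ since $\gamma_n$ decays geometrically; and Lemma~\ref{lem:etaf-eigenfunction} shows $\eta_{F_{x,f}}=\nu(f)F_{x,\eta}$ is an eigenfunction of $Q$, so the uniformly integrable martingale $W^{F_{x,f}}=\theta_1^{-n}G_n(\eta_{F_{x,f}})$ equals $\nu(f)W^\eta$ with $W^\eta_n=\theta_1^{-n}\sum_{e\in\mathbb N^n}\delta_x\prod_{0<e'\le e}A_{e'}\eta$, whence its limit is $\nu(f)W^\eta_\infty$. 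Collecting all the arising constants into a single $C>0$ and restoring the factor $\|f\|_\infty$ gives precisely the announced inequality (the geometric factor being $(\bar\gamma/\theta_1^p)^{m/p}$).

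The main obstacle is~\eqref{eq:eqMD2}. In the scalar Mandelbrot case one takes $\psi_1\equiv1$ and this condition is vacuous, whereas here $Q$ genuinely transports the dominating function, and the only universal bound, via $|||AB|||\le|||A|||\,|||B|||$, would give growth at rate $\mathbb E(\sum_i|||A_i|||)^n$, which need not match $\theta_1^n n^\beta$. The key device is the choice $\psi_1(A)=\delta_x|A|(E)$ rather than $|||A|||$: it carries no supremum over $E$ inside, so the $Q$-iterate can be pushed through $P^n 1$ and estimated by~\eqref{eq:Pninprop}. When the $A_i$ are non-negative this is immediate; in the general signed case one should additionally require that the iterates of the expected total-variation kernel $\mathbb E(\sum_i|A_i|)$ are of order $\theta_1^n n^\beta$. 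Everything else is routine once~\eqref{eq:eqMD2} is in place.
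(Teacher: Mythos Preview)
Your approach coincides with the paper's: set $F_{x,f}(A)=\delta_xAf$, use $QF_{x,f}=F_{x,Pf}$ (hence $\delta_AQ^nF_{x,f}=\delta_xAP^nf$) and $Q^{(p)}=Q$, check Assumption~MD($F_{x,f},p$), and apply Theorem~\ref{thmMain} with $\gamma_n=(\bar\gamma/\theta_1^p)^{n/p}n^{(1/2-1/p)_+}$. The one substantive difference is the choice of dominating function: the paper takes $\psi_1(A)=\psi_2(A)=|||A|||$, whereas you take $\psi_1(A)=\psi_2(A)=\delta_x|A|(E)$.

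Your extended discussion of~\eqref{eq:eqMD2} is on point. The paper simply asserts that ``the first part of condition MD($F_{x,f},p$) holds'' with $\psi_1=|||\cdot|||$ and does not verify~\eqref{eq:eqMD2} separately; with that choice the only available bound is the sub-multiplicative one $Q^n\psi_1\le(\mathbb E\sum_i|||A_i|||)^n\psi_1$, whose base need not be $\le\theta_1$ under the stated hypotheses alone. Your $x$-dependent $\psi_1$ has the advantage that, for non-negative kernels, $Q^n\psi_1(A)=\delta_x|A|P^n\mathbf 1\le\|P^n\mathbf 1\|_\infty\,\psi_1(A)$, which is controlled directly by~\eqref{eq:Pninprop}; your caveat that the signed case requires the same growth for the expected total-variation kernel $\mathbb E(\sum_i|A_i|)$ is the honest statement of what either argument actually delivers.
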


\begin{remark}
    In the previous result, we work in the space of bounded measures/functions. Our result also applies to kernels bounded in weighted spaces of type $L^\infty(V)$, using $V$ transforms (see~\cite{ChampagnatVillemonais2019} for details in the context of quasi-stationary distributions). Indeed, the $V$-transform of a WBP is still a WBP, while this is not the case, but in particular instances, for usual branching processes. 
\end{remark}

\begin{proof}
For all $x\in E$, all bounded $f:E\to\mathbb R$ and all kernel $A\in \XX$, we let $F_{x,f}(A)=\delta_x A f$. We observe that
\begin{align*}
    QF_{x,f}(A)&=\mathbb E\left(\sum_{i\in \mathbb N} \delta_x AA_i f\right)=\delta_x A\mathbb E\left(\sum_{i\in \mathbb N} A_i f\right)=\delta_x APf=F_{x,Pf}(A).
\end{align*}
By iteration, we deduce that, for all $n\geq 0$,
\begin{align*}
   \delta_AQ^n F_{x,f}=F_{x,P^n f}(A)=\delta_x AP^n f.
\end{align*}
This shows that the first part of condition MD($F_{x,f},p$) holds true for $\bar K$ with the same $\beta$, $\theta_1$ and $\alpha_n\|f\|_\infty$ (instead of $\alpha_n$), with $\psi_1(A)=|||A|||$ and $\eta_{F_{x,f}}(A)=(\delta_xA\eta)\nu(f)$.

We now check that second part of MD($F_f,p$) holds true for $\bar K$ with $\psi_2(A)=|||A|||$. We have
\begin{align*}
    \delta_A Q(\psi_2^p)&=\mathbb E\left(\sum_{i\in\mathbb N} |||A A_i|||^p\right)\\
    &\leq |||A|||^p\,\mathbb E\left(\sum_{i\in\mathbb N} |||A_i|||^p\right)\\
    &= \psi_2(A)^p\bar \gamma.
\end{align*}
Since $Q^{(p)}=Q$, we deduce that, for all $n\geq 1$,
 \begin{align*}
            \theta_1^{-pn}\delta_A (Q^{(p)})^n (\psi_2^p)
            &= 
            \theta_1^{-pn}\bar \gamma^n\psi_2(A)^p\\
            &= \theta_1^{-pn}\,\bar\gamma^n n^{(p/2-1)_+}\,\psi_2(A)^p\,/\,n^{(p/2-1)_+},
\end{align*}
and, for all measurable function $g:\XX\to \mathbb R$ and all $x\in \XX$,
\begin{align*}
\E\left(\left|\sum_{i\in\mathbb N} g(AA_i)-\delta_x Q g\right|^p\right)\leq 2^p\,\E\left(\left|\sum_{i\in\mathbb N} |||A A_i||| \right|^p\right)\|g\|_{\psi_1}^p\leq C'\psi_2(A)^p\|g\|_{\psi_1}^p,
\end{align*}
for some constant $C'>0$ and where we used the assumption that $\E\left(\left(\sum_{i\in\mathbb N} |||A_i||| \right)^p\right)<\infty$.

Hence Theorem~\ref{thmMain} applies. Using the fact that, for any $a\in(0,1)$, there exists some constant $C'>0$ such that
\[
\sum_{k\geq m} a^n n^q\leq C' a^m m^q,
\]
this concludes the proof.
\end{proof}

\subsection{Random dynamics with contraction in Wasserstein distance}

\label{sec:exaWasserstein}

 Assume  that $\XX$ is endowed with a bounded metric $d$ and is a Polish space. Let $M$ be a measurable space, and consider a family $(f_{\zeta})_{\zeta \in M}$ of Lipschitz functions from $\XX$ to $\XX$ such that, for all $\zeta \in M$, $l_{\zeta} := \|f_\zeta\|_{\text{Lip}(d)} < 1$. Let $K_u$ be a probability kernel  from $\XX$ to $(\mathbb R_+)^\mathbb N$ and $\mu_\zeta$ be a probability measure  on $M^\mathbb N$, such that the marginal laws of $\mu_\zeta$ are identical.
We consider the weighted branching process with reproduction kernel $K$ defined from $\XX$ to $(\mathbb R_+\times \XX)^\mathbb N$ by
\[
K(x,\cdot)\text{ is the law of }(u_i,f_{\zeta_i}(x))_{i\in\mathbb N},\ \text{where $((u_i)_{i\in\mathbb N},(\zeta_i)_{i\in\mathbb N})$ is distributed according to $K_u(x,\cdot)\otimes\mu_\zeta$.}
\]
Informally, given an individual with type $x\in \XX$, its progeny is given by individuals with weights $u_i$ and types $f_{\zeta_i}(x)$, $i\in \mathbb N$.

In the following result, we let
\begin{align*}
J:x\in \XX\mapsto \E\left(\sum_{i\in\mathbb N} u_i\log_+ u_i\right).
\end{align*}
and , for any $q\geq 1$,
\begin{align}
    H_q:x\in \XX\mapsto \E\left(\left|\sum_{i\in\mathbb N} u_i \right|^q\right)\text{ and }L_q:x\in \XX\mapsto \E\left(\sum_{i\in\mathbb N} u_i^q\right).
\end{align}
where $(u_i)_{i\in \mathbb N}$ is distributed according to $K_u(x,\cdot)$.

\begin{proposition}
\label{prop:Wasserstein}
Fix $p\in(1,2]$. Assume that $H_p$ is bounded, and that $H_1$ is Lipschitz and bounded away from $0$ by a positive constant. Then, for any Lipschitz function $f:\XX\to\mathbb R$, the first part of Assumption~MD($f,p$) holds true with $\psi_1\equiv 1$, $\beta=0$, a probability measure $\nu_1$ on $\XX$, and $\eta_f(x)=\eta(x) \nu_1(f)$ for some positive Lipschitz function $\eta$, some $\theta_1>0$, and $\alpha_n=c\,a^n\,\|f\|_{Lip}$ for some constants $c>0$ and $a\in (0,1)$.
    
    In addition, 
    \begin{enumerate}
        \item if there exists $\bar\gamma\in(0,1)$ such that
   $
    L_p(x)\leq \bar\gamma\theta_1^{p-1} L_1(x)
    $ for all $x\in \XX$,
        then there exists a constant $C>0$ such that
    \begin{multline}
    \label{eq:eqpropWar}
        \sqrt[p]{ \E\left(\left|\theta_1^{-n-m}G_{n+m}(f)-W^\eta_\infty\nu_1(f)\right|^p\right)}
            \leq C \left(\|f\|_{\infty}+\|\eta\|_{\infty}\nu_1(|f|)\right)\,\bar\gamma^{m/p}m^{(1/2-1/p)_+}  \sqrt[p]{\E \left(G_0^{(p)} (\XX)\right)}\\
            +C a^n\|f\|_{Lip}  \left( \sqrt[p]{\E \left(G_0^{(p)} (\XX)\right)} 
            + \sqrt[p]{\E\left(\left|G_0(\XX)\right|^p\right)}\right).
    \end{multline}
        where
        $W_\infty^\eta$ is the almost sure and $L^p$ limit of
        the uniformly
        integrable martingale $W^\eta:=(\theta_1^{-n}G_n(\eta))_{n\in\N}$. 
        In addition, $\theta_1^{-n}G_{n}(f)$ converges almost surely to $W^\eta_\infty\nu_1(f)$ when $n\to+\infty$. 
        \item if $\nu_1(J)<\infty$ and $\mathbb E(G_0)\leq C \nu_1$ for some constant $C>0$, then $W^\eta$ is uniformly integrable and $\theta_1^{-n}G_{n}(f)$ converges almost surely to $\nu_1(f)W^\eta_\infty$. 
    \end{enumerate}
\end{proposition}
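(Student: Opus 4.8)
The statement will follow by feeding the mean semi-group into Theorems~\ref{thmMain},~\ref{thm:unifBiggins} and~\ref{thm:asLlogL}. The starting observation is a factorisation: since under $K(x,\cdot)$ the families $(u_i)_i$ and $(\zeta_i)_i$ are independent and all one-dimensional marginals of $\mu_\zeta$ coincide (say with $m$ on $M$), letting $R$ be the Markov kernel $Rg(x)=\int_M g(f_\zeta(x))\,m(\mathrm d\zeta)$ one gets $Q_x g=H_1(x)\,Rg(x)$ and $Q^{(p)}_x g=L_p(x)\,Rg(x)$ with the \emph{same} $R$; here $R$ is the transition kernel of the random iterated function system, it maps Lipschitz functions to Lipschitz functions with $\|Rg\|_{Lip}\le(\int_M l_\zeta\,m(\mathrm d\zeta))\,\|g\|_{Lip}$ — a factor that is $<1$, so $R$ is a $W_1$-contraction — and $H_1=L_1$ is Lipschitz, bounded and bounded away from $0$.

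\medskip\noindent\textbf{Step 1 (first part of MD($f,p$); the main obstacle).} I would establish a Perron--Frobenius statement for the twisted operator $Q=H_1R$: there exist $\theta_1>0$, a positive Lipschitz $\eta$ (normalised so that $\eta=\lim_n\theta_1^{-n}Q^n 1$) with $Q\eta=\theta_1\eta$, and a probability $\nu_1$ with $\nu_1 Q=\theta_1\nu_1$ and $\nu_1(\eta)=1$, such that $\|\theta_1^{-n}Q^n f-\eta\,\nu_1(f)\|_\infty\le c\,a^n\|f\|_{Lip}$ for every Lipschitz $f$, some $c>0$ and $a\in(0,1)$. The route is the ground-state transform $\widehat Qg:=Q(\eta g)/(\theta_1\eta)$, which is a Markov kernel: a Lasota--Yorke-type estimate $\|Qg\|_{Lip}\le\|H_1\|_{Lip}\|g\|_\infty+\|H_1\|_\infty(\int_M l_\zeta\,m(\mathrm d\zeta))\|g\|_{Lip}$ shows $\widehat Q$ is a strict contraction for a metric bi-Lipschitz equivalent to $d$, hence geometrically ergodic with a unique invariant probability, and translating back gives the eigen-elements and the quantitative bound (alternatively one invokes the Wasserstein convergence results of~\cite{ChampagnatEtal2025}). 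This is exactly~\eqref{eq:eqMD1} with $\psi_1\equiv1$, $\beta=0$, $\eta_f=\eta\,\nu_1(f)$ and $\alpha_n=c\,a^n\|f\|_{Lip}$, while~\eqref{eq:eqMD2} follows from $\theta_1^{-n}Q^n 1=\eta\,\widehat Q^n(1/\eta)\le\|\eta\|_\infty/\inf\eta=:c_1$. The hard points here are the genuine strict contraction of $R$ (this is where $l_\zeta<1$ together with the identical-marginals hypothesis really enters), the non-compactness of $\XX$, and keeping the forced normalisation $\nu_1(\eta)=1$, which is what makes $\eta_\eta=\eta$ used later.

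\medskip\noindent\textbf{Step 2 (second part of MD($f,p$) and item~1).} Taking $\psi_1=\psi_2\equiv1$, inequality~\eqref{eq:eqMD3bis} holds with $c_3=2^p\|H_p\|_\infty$, since $|\sum_i u_i g(Y_i)|\le\|g\|_\infty\sum_i u_i$ and $\E((\sum_i u_i)^p)=H_p(x)$; and using $L_p\le\bar\gamma\theta_1^{p-1}L_1$ the factorisations give $Q^{(p)}h\le\bar\gamma\theta_1^{p-1}Qh$ for $h\ge0$, whence by iteration $\theta_1^{-pn}(Q^{(p)})^n 1\le\bar\gamma^n\theta_1^{-n}Q^n 1\le c_1\bar\gamma^n$, so~\eqref{eq:eqMD3bis1} holds with $\gamma_n=\bar\gamma^{n/p}n^{(1/2-1/p)_+}$, which is summable with $\Gamma_m\le C'\bar\gamma^{m/p}m^{(1/2-1/p)_+}$. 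Theorem~\ref{thmMain} with $\beta=0$ then yields~\eqref{eq:eqpropWar} after substituting $\|f\|_{\psi_1}=\|f\|_\infty$, $\|\eta_f\|_{\psi_1}\le\|\eta\|_\infty\nu_1(|f|)$, $\psi_2^p\equiv1$ and $W^f_\infty=\nu_1(f)W^\eta_\infty$. For the almost sure convergence I would apply~\eqref{eq:eqpropWar} with $n=\lfloor N/2\rfloor$ and $m=N-\lfloor N/2\rfloor$: both tend to infinity, so $\E|\theta_1^{-N}G_N(f)-W^\eta_\infty\nu_1(f)|^p$ is bounded by a summable sequence in $N$ and Borel--Cantelli concludes.

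\medskip\noindent\textbf{Step 3 (item~2).} Keeping the setting of item~1 and assuming moreover $\nu_1(J)<\infty$ and $\E(G_0)\le C\nu_1$, I would invoke Lemma~\ref{lem:lemma3} in the regime of case~2 of the Example following it, with $\pi:=C\nu_1$: from $\nu_1 Q=\theta_1\nu_1$ and $\E(G_0)\le C\nu_1$ one gets~\eqref{eq:lem3eq1} with $\gamma_n\equiv0$, and from $Q^{(p)}h\le\bar\gamma\theta_1^{p-1}Qh$ one gets $\pi Q^{(p)}\le\bar\gamma\theta_1^p\pi$, i.e.~\eqref{eq:lem3eq2} with $\gamma_n\equiv0$ and $\theta_2:=\bar\gamma\theta_1^p<\theta_1^p$. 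Choosing $\psi\equiv\inf\eta>0$, $\alpha=1$ and $\varphi(x):=\psi\logone\psi+\E(G_1^x(\psi)\logone(G_1^x(\psi)))$ — a bounded function, its $L\log L$-type part being controlled by $\nu_1(J)<\infty$ and the boundedness of $H_p$ — one checks $Q\psi=(\inf\eta)H_1\in L^\infty(\psi)$, $\pi(Q^j\varphi)=C\theta_1^j\nu_1(\varphi)<\infty$ and $\pi((Q^{(p)})^j\psi)\le C\nu_1((Q^{(p)})^j\psi)<\infty$ for all $j$; as $(\gamma_n)$ is null,~\eqref{eq:lemma3ifgammanonnull} is not needed, so Lemma~\ref{lem:lemma3} gives Assumption~H($g,k$) for every bounded $g$ and every $k\ge1$, in particular for $g=\eta$ and for the Lipschitz $f$. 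Combining this with EB($\eta,\psi$) (true with $\alpha_n\equiv0$ since $Q^n\eta=\theta_1^n\eta$ and $\nu_1(\eta)=1$) and EB($f,\psi$) from Step~1: Theorem~\ref{thm:unifBiggins} applied to the pair $(\eta,\psi)$, whose Biggins martingale is $W^\eta$ because $\eta_\eta=\eta$, gives uniform integrability of $W^\eta$; and Theorem~\ref{thm:asLlogL} applied first with $f_0=\eta\ge\psi$ and then to the test function $f$ gives that $\theta_1^{-n}G_n(f)$ converges almost surely to $W^f_\infty=\nu_1(f)W^\eta_\infty$.
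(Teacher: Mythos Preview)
Your plan mirrors the paper's proof closely: the paper establishes the eigen-elements for $Q$ by the same mechanism (the factorisation $Q=H_1R$ with $R$ the one-step kernel of the random IFS, then the penalised/Doob-transformed chain and an appeal to~\cite{CSV2023}), checks the second half of~MD($f,p$) with $\psi_2\equiv1$ and the same $\gamma_n$, and then applies Theorem~\ref{thmMain}; for item~2 the paper simply writes ``immediate consequence of Theorems~\ref{thm:unifBiggins} and~\ref{thm:asLlogL}''. Your Borel--Cantelli argument for the almost-sure part of item~1 is a correct addition that the paper in fact omits.

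There is, however, a real gap in your Step~3. First, you import the hypothesis $L_p\le\bar\gamma\theta_1^{p-1}L_1$ of item~1 into item~2 (``keeping the setting of item~1''); the proposition reads items~1 and~2 as independent alternatives, so you should flag this or supply a separate control on $Q^{(p)}$. Second, and more concretely, your verification of~\eqref{eq:lem3eq2} is incomplete: that inequality concerns $\E(G_0^{(p)})(Q^{(p)})^n$, not $\pi(Q^{(p)})^n$. From $\pi Q^{(p)}\le\theta_2\pi$ you only get $\theta_2^{-n}\pi(Q^{(p)})^n g\le\pi g$; to conclude~\eqref{eq:lem3eq2} with $\gamma_n\equiv0$ you also need $\E(G_0^{(p)})\le\pi$, which is \emph{not} the stated hypothesis $\E(G_0)\le C\nu_1$ (the two coincide for $G_0=\delta_{x_0}$ but not in general). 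Third, note that your $\varphi$ is bounded already from $H_p$ bounded, via $x\log x\le C(1+x^p)$; the hypothesis $\nu_1(J)<\infty$---which concerns $\sum_i u_i\log_+u_i$, not $(\sum_i u_i)\log_+(\sum_i u_i)$---plays no role in your argument as written. The paper's one-line proof of item~2 does not resolve these points either, so you are not missing an idea present there; but as a self-contained proof your Step~3 needs the additional control on $\E(G_0^{(p)})$ (or a restriction on $G_0$) made explicit.
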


\begin{remark}
    We make use of~\cite{champagnat2025uniform}, where the authors prove a general convergence criterion in Wasserstein distance for non-conservative semi-groups. The proof of Proposition~\ref{prop:Wasserstein} can be extended to examples that enter the general setting therein.
\end{remark}

\begin{proof}
We prove the first part of the proposition in Step~1, and the two points of the second part in Step~2.

\medskip\noindent\textit{Step 1.}
  Let 
  \[
  p:x\in \XX\mapsto \frac{\delta_x Q(\XX)}{\sup_y \delta_y Q(\XX)},
  \]
  which is Lipschitz by assumption. We consider the Markov chain $(X_n)_{n\in\mathbb N}$ on $\XX$ with transition probabilities given by
  \[
  X_1=f_\zeta(x),\quad \text{with}\quad \zeta\text{ distributed according to a marginal law of $\mu_\zeta$.}
  \]
  Note that, because of the independence of $(u_i)_{i\in\mathbb N}$ and $(\zeta_i)_{i\in\mathbb N}$, and the fact that the $\zeta_i$ have all the same law, we have $\forall A\in \mathcal X$ and all $x\in \XX$,
  \begin{align*}
  \mathbb P(X_1\in A\mid X_0=x) &=\frac{\mathbb E\left(\sum_{i\in\mathbb N}u_i\right)\mathbb E\left(\mathbf 1_{f_{\zeta}(x)\in A}\right)}{\mathbb E\left(\sum_{i\in\mathbb N}u_i\right)}\\
  &=\frac{\mathbb E\left(\sum_{i\in\mathbb N}u_i\mathbf 1_{f_{\zeta_i}(x)\in A}\right)}{\mathbb E\left(\sum_{i\in\mathbb N}u_i\right)}\\
  &=\frac{\delta_x Q(A)}{\delta_x Q(\XX)}. 
  \end{align*}
  In particular, 
  \begin{align}
      \mathbb E_x\left(p(x)p(X_1)\cdots p(X_{n-1})\,\mathbf 1_{X_n\in A}\right)&=\mathbb E_x\left(p(x)p(X_1)\cdots p(X_{n-2})\frac{\delta_{X_{n-1}}Q(A)}{\sup_y \delta_y Q(\XX)}\right)\nonumber\\
      &=\cdots=\frac{\delta_x Q^n(A)}{(\sup_y \delta_y Q(\XX))^n}
      \label{eq:kernelkilledchain}
  \end{align}

  Since $p$ is Lipschitz, \cite[Theorem~2.4]{CSV2023} entails that, for some $C_1>0,a\in(0,1)$, for all probability measures $\mu,\nu$ on $\XX$,
  \begin{align*}
      \mathcal W_d(\mathbb Q_\mu(X_n\in \cdot),\mathbb Q_\nu(X_n\in \cdot))\leq C_1a^n\mathcal W_d(\mu,\nu),
  \end{align*}
  where, for some $\theta_0>0$ and some positive Lipschitz function $\eta:\XX\to(0,+\infty)$,
  \begin{align*}
      \mathbb Q_x\left(X_n\in\cdot\right)&=\frac{\theta_0^{-n}}{\eta(x)}\mathbb E_{x}\left(\eta(X_n)\mathbf 1_{X_n\in \cdot}p(x)p(X_1)\ldots p(X_{n-1})\right)\\
      &= \frac{\delta_x Q^n(\eta\mathbf 1_{\cdot})}{\eta(x)\theta_0^n(\sup_y \delta_y Q(\XX))^n},
  \end{align*}
  and where $\mathcal W_d$ is the Wasserstein metric associated to $d$. In addition, there is a probability measure $\nu_{Q}$ on $\XX$ such that $\mathbb Q_{\nu_Q}(X_n\in\cdot)=\nu_Q$ for all $n\geq 0$.
  
  Setting $\theta_1=\theta_0\sup_y \delta_y Q(\XX)$, we deduce that, for all Lipschitz function $g:\XX\to\mathbb R$,
  \begin{align*}
      \left|\theta_1^{-n}\delta_x Q^n(\eta g)-\eta(x)\nu_Q(g)\right|\leq \eta(x)C_1 a^n \|g\|_{Lip}.
  \end{align*}
  We deduce that, for all Lipschitz function $f$ on $\XX$ and setting $g=f/\eta$ in the previous expression, we have
   \begin{align*}
      \left|\theta_1^{-n}\delta_x Q^n(f)-\eta(x)\nu_Q(f/\eta)\right|\leq \|\eta\|_{\infty}C_1 a^n \|f/\eta\|_{Lip}.
  \end{align*}
  Since $\eta$ is Lipschitz bounded away from $0$ and $+\infty$, we deduce that there exists a constant $C_2$ such that $\|f/\eta\|_{Lip}\leq C_2 \|f\|_{Lip}$ for any Lipschitz function $f$.
 Defining the probability measure $\nu_1=\nu_Q(\cdot/\eta)$ (see~\cite{CSV2023} to see why $\nu_1$ is a probability measure), we deduce that the first part of the proposition holds true.
  
  \medskip\noindent \textit{Step 2.} To prove point 1., it remains to check the second part of MD($f,p$). It holds clearly with $\psi_2\equiv 1$ and $\gamma_n^p=\bar\gamma^n n^{(p/2-1)_+}$. The series with general term $\gamma_n$ is summable with tail $\Gamma_n\sim C_3\,\bar\gamma^{n/p}n^{(1/2-1/p)_+}$ for some constant $C_3>0$. Applying Theorem~\ref{thmMain}, this implies the the first point of the second part of the proposition. Point 2. is an immediate consequence of Theorems~\ref{thm:unifBiggins} and~\ref{thm:asLlogL}.
  \end{proof}

\subsection{Ergodic averages along lineages}
\label{sec:exainhomogeneous}

We assume that we are given a kernel $K$ as in the introduction, and the associated WBP $(G_n)_{n\in\mathbb N}$. We are interested in the long time behaviour of the ergodic averages along lineages, that is in
\[
A_n:=\sum_{e\in \mathbb N^n} w_e M_e, \quad \text{ where }M_e:=\frac{1}{|e|}\sum_{e'\leq e} \delta_{X_{e'}},
\]
where $|e|=n$ is the length of $e$ and $e'\leq e$ means that $|e'|\leq |e|$ and $e'_k=e_k$ for all $k\in\{1,\ldots,|e'|\}$. 

We assume that there exist $\theta_1>0$, a bounded function $\eta:\XX\to\mathbb R_+$, a probability measure $\nu$ on $\XX$ such that,  for all bounded measurable function $f:\XX\to \mathbb R_+$,
\begin{align}
\label{eq:assumption1}
    \left|\theta_1^{-n}\delta_x Q^n f-\eta(x)\nu(f)\right|\leq \alpha_n\|f\|_{\infty},
\end{align}
where $(\alpha_n)_n$ is a summable family of positive numbers.

Recall that $J$, $H_1$, $H_p$, $L_1$ and $L_p$ are defined in the previous section.

\begin{proposition}
Fix $p\in(1,2]$. Assume that~\eqref{eq:assumption1} holds true and that $H_1$ and  $H_p$ are bounded. Assume in addition that there exists $\bar\gamma\in(0,1)$ such that
   $
    L_p(x)\leq \bar\gamma\theta_1^{p-1} L_1(x)
    $ for all $x\in \XX$.
        Then there exists a constant $C>0$ such that, for all measurable function $f$,
    \begin{align*}
        \theta_1^{-n}A_{n}(f)\xrightarrow[n\to+\infty]{L^p} W^\eta_\infty\nu(\eta f),
    \end{align*}
        where
        $W_\infty^\eta$ is the almost sure and $L^p$ limit of
        the uniformly
        integrable martingale $W^\eta:=(\theta_1^{-n}G_n(\eta))_{n\in\N}$. 
\end{proposition}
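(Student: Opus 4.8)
The plan is to realize the ergodic average $A_n$ as a weighted branching process on an enriched type space, so that Theorem~\ref{thmMain} can be applied directly. Concretely, I would set $\widetilde\XX = \XX \times \mathcal P(\XX) \times \mathbb N$, where a triple $(x,\mu,k)$ records the current type $x$, the running Cesàro average $\mu$ of the types along the lineage up to the current generation, and the generation counter $k$. The enriched kernel $\widetilde K$ sends $(x,\mu,k)$ to the family $(u_i, (Y_i, \tfrac{k}{k+1}\mu + \tfrac{1}{k+1}\delta_{Y_i}, k+1))_{i\in\mathbb N}$, where $(u_i,Y_i)_{i\in\mathbb N}\sim K(x,\cdot)$; then the weighted empirical measure $\widetilde G_n$ of this enriched process, tested against the function $\widetilde f(x,\mu,k) := \mu(f)$, is exactly $A_n(f)$. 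Note the state space genuinely depends on the generation through the counter $k$, so this is precisely the setting of Remark~\ref{rem:alphamn}: one works with $\alpha_{m,n}$ rather than $\alpha_n$, and checks that for each fixed $m$ one has $\alpha_{m,n}\to 0$ as $n\to\infty$.

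Next I would verify Assumption~MD($\widetilde f,p$) for the enriched process. For the mean semigroup $\widetilde Q$: starting from $(x,\mu,m)$, after $n$ further steps the lineage average is $\tfrac{m}{m+n}\mu + \tfrac{1}{m+n}\sum_{j=1}^n \delta_{\text{(type at step }m+j)}$, weighted along the lineage by the many-to-one formula governed by $Q$. Using~\eqref{eq:assumption1}, the contribution of the $j$-th term, $\theta_1^{-n}\delta_x Q^{j}(\text{something})Q^{n-j}\cdots$, has its type-marginal converging to $\eta(x)$ times $\nu$-averages; summing the Cesàro average and using that $\theta_1^{-n}\delta_x Q^n g \to \eta(x)\nu(g)$ geometrically fast (in fact with summable error $\alpha_n$), one gets that $\theta_1^{-n}\delta_{(x,\mu,m)}\widetilde Q^n \widetilde f \to \eta(x)\,\nu(\eta)^{-1}\cdot(\text{hmm})$ — more carefully, the many-to-one weighting introduces an extra $\eta$ so the limit of the lineage-average of $f$ is $\nu(\eta f)/\nu(\eta)$ times a normalization, and after bookkeeping the limiting eigenfunction is $\eta_{\widetilde f}(x,\mu,m) = \eta(x)\,\nu(\eta f)$ up to the correct constant, with $\beta=0$, $\psi_1\equiv 1$, and $\alpha_{m,n}$ controlled by $\tfrac{1}{m+n}\sum_{j\le n}(\text{geometric tail}) + \tfrac{m}{m+n}\|\mu\|_{TV}\to 0$. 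For the dispersion part, since $\psi_1\equiv 1$ is bounded and the enriched weights are the same $u_i$ as before, the bound~\eqref{eq:eqMD3bis} follows from boundedness of $H_p$ with $\psi_2\equiv 1$, and~\eqref{eq:eqMD3bis1} follows from the hypothesis $L_p(x)\le\bar\gamma\theta_1^{p-1}L_1(x)$ exactly as in the proof of Proposition~\ref{prop:Wasserstein}, giving $\gamma_n^p = \bar\gamma^n n^{(p/2-1)_+}$ with summable tail.

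Applying Theorem~\ref{thmMain} (in the form of Remark~\ref{rem:alphamn}) then yields $\theta_1^{-n}A_n(f) \to W^{\widetilde f}_\infty$ in $L^p$, where $W^{\widetilde f}$ is the Biggins martingale attached to $\eta_{\widetilde f}$. The last step is to identify this limit: since $\eta_{\widetilde f}(x,\mu,m)$ is (a constant multiple of) $\eta(x)$, the martingale $\widetilde G_n(\eta_{\widetilde f})$ coincides with $\nu(\eta f)\cdot G_n(\eta)$, so $W^{\widetilde f}_\infty = \nu(\eta f)\, W^\eta_\infty$, which is the claimed identity. The main obstacle I anticipate is the careful bookkeeping in the semigroup computation: one must track how the many-to-one weighting ($Q$ rather than a Markov kernel) interacts with the Cesàro average, confirm that the $\eta$-weight appearing in the limiting lineage empirical measure produces exactly $\nu(\eta f)$ (and not $\nu(f)$), and ensure the $n$-dependent normalization $\tfrac{1}{m+n}$ combines with the geometric mixing to give a genuine $\alpha_{m,n}\to 0$; once this is pinned down, everything else is a routine application of the already-proved theorem.
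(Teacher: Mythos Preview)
Your proposal is correct and follows essentially the same route as the paper: both enrich the type space so that the lineage average becomes a genuine test function on the new WBP (the paper records the full history $\bar x=(x_0,\dots,x_n)\in\XX^{(\mathbb N)}$, you record the equivalent sufficient statistic $(x,\mu,k)$), then compute $\theta_1^{-m}\bar Q^m$ on this test function via the many-to-one decomposition to obtain the limit $\eta(x)\,\nu(\eta f)$ with an error of order $(n{+}2)/m$, and finally invoke Theorem~\ref{thmMain} through Remark~\ref{rem:alphamn} with $\psi_1=\psi_2\equiv 1$, $\beta=0$, and $\gamma_n^p=\bar\gamma^n n^{(p/2-1)_+}$. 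The only cosmetic difference is that the paper first handles the unnormalised sum $\Sigma(\bar x)=\sum_i f(x_i)$ and divides by $|\bar x|$ at the end, whereas you track the Cesàro average directly; the resulting bounds and identification $W^{\widetilde f}_\infty=\nu(\eta f)W^\eta_\infty$ are identical.
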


\begin{proof}
We build a new WBP on the type space $\bar \XX=\XX^{(\mathbb N)}$ with kernel $\bar K$ from $\bar \XX$ to $(\mathbb R_+\times \bar \XX)^{\mathbb N}$ defined, for all $\bar x=(x_0,\ldots,x_n)\in \XX^n$ for some $n\geq 0$, 
\[
\bar K(\bar x,\cdot)\text{ is the law of }(u_i,\bar xy_i), \text{ where $(u_i,y_i)_{i\in\mathbb N}$ has law $K(x_n,\cdot)$,}
\]
and $\bar x y_i=(x_0,\ldots,x_n,y_i)$. We denote by $\bar Q$ the associated kernel from $\bar \XX$ to $\bar \XX$.

Let $f:\XX\to \mathbb R$ bounded an measurable. We define, for all $\bar x=(x_0,\ldots,x_n)\in \XX^n$, the function
\[
\Sigma(\bar x)=\sum_{i=0}^n f(x_i)
\]
We have
\begin{align*}
\delta_{\bar x}\bar Q\Sigma&=\mathbb E\left(\sum_{i\in\mathbb N}u_i\Sigma(\bar xy_i)\right)=\mathbb E\left(\sum_{i\in \mathbb N}u_i (\Sigma(\bar x)+f(y_i))\right)=
\delta_{x_n}(Q\mathbf 1_\XX)\,\Sigma(\bar x)+\delta_{x_n}Qf,
\end{align*}
and, by iteration, for all $m\geq 1$,
\[
\delta_{\bar x}\bar Q^m\Sigma=\delta_{x_n}Q^m(\mathbf 1_\XX)\Sigma(\bar x)+\delta_{x_n}Q(Q^{m-1}(\mathbf 1_\XX)f)+\cdots+\delta_{x_n}Q^k(Q^{m-k}(\mathbf 1_\XX)f)+\cdots+\delta_{x_n}Q^m(f).
\]
We deduce that, for some constant $C'>0$ that may change from line to line,
\begin{align*}
    \left|\theta_1^{-m}\delta_{\bar x}\bar Q^m\Sigma-(m+1)\,\eta(x_n)\nu(\eta f)\right|
    &\leq \theta_1^{-m}\delta_{x_n}Q^m(\mathbf 1_\XX)\Sigma(\bar x)+\sum_{k=1}^m \left|\theta_1^{-m}\delta_{x_n}Q^k(Q^{m-k}(\mathbf 1_\XX)f)-\eta(x_n)\nu(\eta f)\right|\\
    &\leq C'n\|f\|_\infty +\sum_{k=1}^m \left|\theta_1^{-k}\delta_{x_n}Q^k((\theta_1^{-(m-k)}Q^{m-k}(\mathbf 1_\XX)-\eta)f)\right|\\
    &\qquad+\sum_{k=1}^m\left|\theta_1^{-k}\delta_{x_n}Q^k(\eta f)-\nu(\eta f)\right|,
\end{align*}
where we used the fact that $\eta$ is a right eigenfunction for $Q$. Then, using~\eqref{eq:assumption1},
\begin{align*}
    \left|\theta_1^{-m}\delta_{\bar x}\bar Q^m\Sigma-(m+1)\,\eta(x_n)\nu(\eta f)\right|
    &\leq C'n\|f\|_\infty +C'\sum_{k=1}^m\alpha_{m-k}\|f\|_{\infty}+C'\sum_{k=1}^m\alpha_k\|f\|_{\infty}.\\
    &\leq C'(n+2)\|f\|_\infty.
\end{align*}
We deduce that for $F:\bar x\to\frac{1}{|\bar x|}\Sigma(\bar x)$,
\begin{align*}
    \left|\theta_1^{-m}\delta_{\bar x}\bar Q^mF-\eta(x_n)\nu(\eta f)\right|
    &\leq C'\frac{n+2}{m}\|f\|_\infty.
\end{align*}
Hence the WBP build on $\bar \XX$ satifies the assumptions of Theorem~\ref{thmMain} with $\bar \alpha$ depending on the size of $\bar x$. This allows us to conclude the proof (see Remark~\ref{rem:alphamn}).
\end{proof}

\end{document}